\newtheorem{thm}{Theorem}
\newtheorem{lem}[thm]{Lemma}
\newtheorem{cor}[thm]{Corollary}
\newtheorem{prop}[thm]{Proposition}
\theoremstyle{definition}
\newtheorem{definition}{Definition}
\theoremstyle{definition}
\newtheorem*{obs}{Observation}
\newcommand{\Z}{\mathbb{Z}}
\newcommand{\K}{\mathbb{K}}
\newcommand{\eme}{\mathcal{m}}
\newcommand{\F}{\mathbb{F}}
\author{Francisco Franco Munoz}
\date{}							
\title{\bf Subrings of Finite Commutative Rings}
\begin{document}

\newcommand{\Addresses}{{
  \bigskip
  \footnotesize

Francisco Franco Munoz, \textsc{Department of Mathematics, University of Washington,
    Seattle, WA 98195}\par\nopagebreak
  \textit{E-mail address}: \texttt{ffm1@uw.edu}
}}

\maketitle

\begin{abstract} We record some basic results on subrings of finite commutative rings. Among them we establish the existence of Teichmuller units and find the number of maximal subrings of a given finite local ring.
\end{abstract}

\section{Introduction}

 For a given ring $R$, a natural question is to determine \emph{all} its subrings. To expect a reasonable answer, we must restrict attention to a well behaved class of rings, for example finite rings. It's also possible to enlarge this class slightly, while still retaining many feature of finite rings. In some sense, local rings are such a class. After setting up some elementary results on finite rings, we focus on a class in between, namely, the class $\bigstar$ (see section \ref{section-4}) of those local rings of characteristic $p^N$ where the maximal ideal is nilpotent. This class retains most good features of finite local rings. We obtain by elementary methods the existence of ``coefficient rings", via the existence of ``Teichmuller units", and proceed to obtain our main result, Theorem \ref{main-thm} on the parametrization of maximal subrings with equal residue field. We conclude with a the calculation of the number of maximal subrings of finite local rings.

\subsection{Notation}

All the rings considered are commutative and unital, and all the homomorphisms are unital. 

By a local ring $(R, \eme_R)$ we mean a ring with a unique maximal ideal ($\eme_R)$, not necessarily Noetherian (sometimes called quasi-local rings). 

For a ring $R$ denote $R^{\times}$ the group of units of $R$.

\subsection{On previous literature}

We have aimed to provide self-contained proofs of most results. Some basic results of commutative algebra are assumed and we'll often refer to $\cite{SP}$ for comprehensive treatments of the matter at hand.


\section{Reduction to finite local}

\subsection{From finite index subrings to finite rings}

Here's a well known result linking finite index subrings with finite rings. For lack of reference we include the proof.

\begin{thm} Suppose $R$ is any ring. Then the proper finite index subrings of $R$ correspond with subrings of residue rings $R/I$ where $I$ runs over the collection of proper finite index ideals of $R$. Similarly for algebras over a field $\K$, replacing finite index with finite codimension. 
\end{thm}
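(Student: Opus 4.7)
The content is that every proper finite-index subring $S$ of $R$ contains an ideal $I$ of $R$ with $R/I$ finite. Once such an $I$ is produced, the correspondence sends $S$ to the proper subring $S/I \subseteq R/I$, and in the reverse direction sends a proper subring $T \subsetneq R/I$ (for $I$ a proper finite-index ideal) to its preimage $\pi_I^{-1}(T) \subseteq R$, which is a proper subring of index $[R/I:T]$. The substantive step is constructing $I$.

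\textbf{The conductor.} Given $S \subsetneq R$ with $n := [R:S] < \infty$, the natural candidate is the conductor $C := (S:R) = \{r \in R : rR \subseteq S\}$. Since $1 \in R$, any $r \in C$ satisfies $r = r \cdot 1 \in rR \subseteq S$, so $C \subseteq S$. Commutativity makes $C$ an ideal of $R$: for $c \in C$ and $r \in R$ one has $(rc)R = c(rR) \subseteq cR \subseteq S$, so $rc \in C$.

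\textbf{Finiteness of $R/C$.} First, $R$ is finitely generated as an $S$-module: if $r_1 = 1, r_2, \ldots, r_n$ are coset representatives for $R/S$, then any $r \in R$ lies in some $r_i + S$, hence $r \in Sr_1 + Sr_i$, giving $R = \sum_j Sr_j$. Next, since $S \cdot S \subseteq S$, multiplication by elements of $S$ descends to the finite abelian group $R/S$, yielding a ring map $S \to \operatorname{End}_{\Z}(R/S)$ whose kernel is precisely $\{s \in S : sR \subseteq S\} = C$. Because $\operatorname{End}_\Z(R/S)$ is finite, $S/C$ is finite. Then $R/C = \sum_j (S/C)\bar{r}_j$ is a module over a finite ring with finitely many generators, so $|R/C| \leq |S/C|^n < \infty$.

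\textbf{Reverse direction and $\K$-algebras.} Thus $C$ is a proper (because $C \subseteq S \subsetneq R$) finite-index ideal, and $S/C$ is a proper subring of $R/C$. Conversely, for any proper finite-index ideal $I$ and any proper subring $T \subsetneq R/I$, the preimage $\pi_I^{-1}(T)$ is a proper subring of finite index $[R/I:T]$. For $\K$-algebras the identical argument applies with $\K$-codimension in place of index, $\K$-subspaces in place of subgroups, and $\operatorname{End}_\K(R/S)$ (finite-dimensional since $R/S$ is) in place of $\operatorname{End}_\Z(R/S)$. The one subtle point in the argument is finiteness of $S/C$: although $R$ itself does not act on $R/S$ (since $rS$ need not lie in $S$), the subring $S$ does, and this detour through the endomorphism ring of $R/S$ is what makes the conductor manageable.
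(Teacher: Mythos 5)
Your proposal is correct, and it follows the same basic strategy as the paper: produce the conductor $C=\{r\in R: rR\subseteq S\}$ as the largest ideal of $R$ contained in $S$, show it has finite index, and then pass to $S/C\subseteq R/C$. But the execution of the finiteness step is genuinely different, and your version is actually the more careful one. The paper defines a map $ev:R\to \operatorname{End}_S(R/S)$ by $ev(r)([s])=[rs]$ and takes $C$ to be its kernel; as written, $ev(r)$ is not well defined on the quotient $R/S$ for arbitrary $r\in R$, since $rS\subseteq S$ can fail (e.g.\ $R=\Z[x]/(x^2)$, $S=\Z+2x\Z$, $r=x$: here $[1]=[0]$ in $R/S$ but $[x]\neq[0]$). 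You sidestep this exactly where you flag the ``subtle point'': only $S$ acts on $R/S$, so you get a well-defined ring map $S\to\operatorname{End}_{\Z}(R/S)$ with kernel $C$, conclude $S/C$ is finite, and then recover finiteness of $R/C$ from the finite generation $R=\sum_j Sr_j$ by coset representatives. What your detour buys is a proof that compiles without repair; what the paper's phrasing buys (if one reads $ev$ as landing in $\operatorname{Hom}_S(R,R/S)$ rather than in an endomorphism ring) is a one-step identification of $C$ as a kernel, at the cost of the well-definedness issue above. Your treatment of the reverse direction and of the $\K$-algebra case is routine and correct.
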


\begin{proof} The nontrivial part consists of exhibiting a finite index ideal contained in a given proper finite index subring $S\subseteq R$. For that, consider the $S$-module map $R\to R/S$. This map gives a ring homomorphism $ev: R\to End_{S}(R/S)$ given by $ev(r)([s])=[rs]$. Here $End_{S}(R/S)$ is the ring of $S$-endomorphisms of the finite (set) $S$-module $R/S$, hence a finite abelian group. Its kernel $I$ is a finite index ideal in $R$ that clearly is contained $S$. In fact, $I$ is the ``conductor" of $S$ in $R$, which is the largest ideal of $R$ inside $S$. And $S/I\subseteq R/I$, the latter is a finite ring.

The case of $\K$-subalgebras is similar.
\end{proof}

\subsection{Basic reduction}

From the above, to study finite index subrings a given ring we only need finite rings.\\

Here's a collection of basic facts.
\begin{thm} Suppose $A$ is a finite ring.
\begin{enumerate}
\item $A=\prod A[p]$ is the direct product of subrings $A[p]$ which are its Sylow $p$-subgroups (as abelian group). And for any subring $B=\prod B[p]$, $B[p]\subseteq A[p]$.
\item A finite $p$-ring (i.e. whose additive group is a $p$-group) is up to isomorphism the direct product of local subrings in a unique way. 
\item A finite local ring is a $p$-ring for some prime $p$ and it's characteristic is $p^N$ for some $N\geq 1$.
\item One can build up all the subrings from products of local subrings of a product ring in a prescribed manner.
\item Consider a product of $p$-local finite rings $A=A_1\times A_2\times .... \times A_n$. The collection of maximal local subrings of $A$ is in bijection with the collection of maximal local subrings of the product of fields $\F_{q_1}\times ... \times \F_{q_n}$, where $\F_{q_i}$ is the residue field of $A_i$ and $q_i=p^{e_i}$ is a power of $p$.
\item Consider all the finite fields $\F_{q_i}$ as contained in a fixed algebraic closure of $\F_p$. The maximal local subrings of the product $\prod \F_{q_i}$ are contained in the product $\F^n$ where $\F$ is the intersection of all the $\F_{q_i}$.
\end{enumerate}
\end{thm}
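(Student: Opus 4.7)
The six parts are largely independent; I would treat them in the order listed, with parts (4)--(6) being the only ones requiring a genuinely new idea.

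For (1), the plan is to use the standard primary decomposition of the finite abelian group $(A,+)=\bigoplus_p A[p]$ where $A[p]=\{x\in A : p^k x=0 \text{ for some } k\}$. Multiplicative closure of $A[p]$ is automatic: if $p^k x=0$ then $p^k(xy)=0$. Writing $1=\sum_p e_p$ with $e_p\in A[p]$, the orthogonality $e_p e_q=0$ for $p\neq q$ and $e_p^2=e_p$ follows because $e_p e_q$ lies in $A[p]\cap A[q]=0$; thus each $e_p$ is the identity of the subring $A[p]$. The statement for a subring $B\subseteq A$ is then immediate since additive orders are preserved by inclusion. For (2), the idea is that a finite commutative ring is Artinian, so the Jacobson radical $J$ equals the nilradical, $A/J$ is a finite product of fields by the Chinese Remainder Theorem, and the orthogonal idempotents lift through the nilpotent ideal $J$ to give $A\cong\prod A_i$ with each $A_i$ local. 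Uniqueness of the decomposition follows from uniqueness of the primitive idempotents of $A$. For (3), in a finite local ring $(R,\eme_R)$ the residue field $R/\eme_R$ has some prime characteristic $p$, so $p\in \eme_R$; since $R$ is Artinian $\eme_R$ is nilpotent, hence so is $p$, and this forces the additive order of $1$ (i.e.\ the characteristic) to be a power of $p$, which in turn makes $R$ a $p$-ring.

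For (4), the plan is to observe that by (2), a finite $p$-ring $A$ has a unique decomposition $A=\prod A_i$ into local factors with primitive idempotents $e_1,\ldots,e_n$. A subring $S\subseteq A$ need not contain these $e_i$, but one can analyze $S$ via the partition of $\{1,\ldots,n\}$ determined by the finer decomposition of the primitive idempotents of $S$ inside $A$: each primitive idempotent of $S$ is a sum $\sum_{i\in I} e_i$ for some subset $I$, and the corresponding local factor of $S$ embeds in $\prod_{i\in I}A_i$. So the description reduces to describing local subrings of products $\prod_{i\in I} A_i$, which is exactly what parts (5) and (6) do.

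For (5), the plan is this. Let $S\subseteq A=\prod A_i$ be a maximal local subring, and let $N=\prod \eme_{A_i}$, the Jacobson radical of $A$, which is nilpotent. Consider $S+N$: I claim $S+N$ is still local (its maximal ideal is $\eme_S+N$ modulo checking, and $N$ is nilpotent so adding it does not create new idempotents), and it equals $S$ by maximality, so $N\subseteq S$. Therefore $S$ is the preimage of a local subring of $A/N=\prod \F_{q_i}$, giving the bijection. For (6), fix a local subring $T\subseteq\prod\F_{q_i}$ with residue field $k$. Its unique maximal ideal is $\eme_T=T\cap(\text{any coordinate-zero hyperplane})$; concretely, since $T$ is local and maps nontrivially into each $\F_{q_i}$, the composite $T\hookrightarrow \prod\F_{q_i}\twoheadrightarrow \F_{q_i}$ has the same kernel $\eme_T$ for every $i$ (otherwise $T$ would have at least two maximal ideals), and each of these composites induces a field embedding $T/\eme_T\hookrightarrow \F_{q_i}$. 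Identifying $T/\eme_T$ with a single field $k$ inside the fixed algebraic closure, we see $k\subseteq \bigcap \F_{q_i}=\F$, and $T$ is diagonally embedded, giving $T\subseteq \F^n$.

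The main obstacle I expect is part (5): proving carefully that adjoining the nilpotent radical $N$ to a maximal local subring $S$ keeps it local. This requires checking that $\eme_S+N$ is a (the unique) maximal ideal of $S+N$, which amounts to showing that units of $S+N$ are precisely elements whose image in $(S+N)/(\eme_S+N)\subseteq A/N$ is a unit, and this is where the nilpotence of $N$ is used via the standard ``$1+\text{nilpotent}$ is a unit" argument.
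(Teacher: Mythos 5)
Your proposal is correct and follows essentially the same route as the paper: primary/idempotent decomposition for (1)--(4), absorbing the nilpotent radical $\prod \eme_{A_i}$ into a maximal local subring for (5), and the residue-field embedding $B/\eme_B \hookrightarrow \F_{q_i}$ for (6). The only cosmetic difference is in (3), where the paper deduces the claim from part (1) plus the absence of nontrivial idempotents in a local ring, while you argue directly via nilpotence of $p$ in $\eme_R$; both are valid.
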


Note: This reduces the study to that of subrings of finite local rings (and these are local). Most results are valid in the context of general Artinian rings with suitable modifications.

\begin{proof}

\begin{enumerate}

\item Clear.
\item This holds for any Artinian ring (\cite[Chapter 8]{Atiyah}).
\item From 1. since a local ring is not a product of rings.
\item Here's the precise statement: Let $B\subseteq A$ be a subring, $B=B_1\times ...\times B_r,  A=A_1\times ... \times A_n$ and all the $B_j, A_i$ local. There exists a partition of $n$ into $r$ sets $C_1, ... , C_r$ such that $\displaystyle B_j\subseteq \prod_{i\in C_j} A_i$ holds. The proof follows by writing each idempotent $\epsilon_j$ corresponding to $B_j$ as a sum of the minimal idempotents $e_i$ of $A$, i.e. $\epsilon_j = \sum_{i\in C_j}e_i$. These exists and are unique since local rings have no nontrivial idempotents. This gives the required mapping $j\mapsto C_j$ which has the desired properties.
\item From above, we only need to consider local subrings of a product ring, say $B\subseteq A= A_1\times ... \times A_n$ with $B$ local. Notice that for a local ring $B$, the sum $B+ \prod \eme_i$ is also local since $\prod \eme_i$ is a nilpotent ideal. So maximal local subrings of $A$ contain the product ideal $\prod \eme_i$, and so they're in bijection with the maximal local subrings of $\prod \F_{q_i}$.
\item If $B\subseteq \prod \F_i$ is local, then $B/\eme_B \to \F_i$ is an injection, and so $B\subseteq \F^n$, where $\F = \bigcap \F_i$. 
\end{enumerate} 
\end{proof}

\section{Teichmuller units}\label{section-4}

We can and will consider a larger class of commutative rings than just local finite. We'll work with: $$\bigstar: (R, \eme) \text{ local of characteristic } p^N, N\geq 1 \text{ and residue field } \F_q  \text{ such that } \eme \text{ is a nilpotent ideal} $$

We don't assume $\eme$ finitely generated. Notice any such $(R,\eme_R)$ is complete (not necessarily Noetherian). We could more generally consider complete local rings, by using more structural results, like Cohen's structure theorem (see \cite[tag 0323]{SP}) but we'll avoid doing so to be more explicit and elementary.

\begin{prop} $\bigstar$ is satisfied by finite local rings.
\end{prop}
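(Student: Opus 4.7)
The proposition has three assertions to verify for an arbitrary finite local ring $(R,\mathfrak{m})$: that its characteristic has the form $p^N$, that its residue field is some $\mathbb{F}_q$, and that $\mathfrak{m}$ is nilpotent. The first two are essentially immediate from what is already recorded, so I would spend the bulk of the argument on the third.

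First I would dispose of the characteristic claim by quoting item 3 of the preceding theorem: a finite local ring is a $p$-ring and its characteristic is $p^N$ for some $N \geq 1$. For the residue field, the quotient $R/\mathfrak{m}$ is a field which, as a quotient of a finite ring, is finite; every finite field is an $\mathbb{F}_q$, and since $R/\mathfrak{m}$ is killed by $p$, we have $q$ a power of $p$.

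The main content is the nilpotence of $\mathfrak{m}$. My plan is to use that $R$ is finite to force a descending chain to stabilize, then invoke Nakayama. Concretely, the chain $\mathfrak{m} \supseteq \mathfrak{m}^2 \supseteq \mathfrak{m}^3 \supseteq \cdots$ consists of subsets of the finite set $R$, hence must be eventually constant: there exists $k$ with $\mathfrak{m}^{k+1} = \mathfrak{m}^k$. Since $R$ is finite, $\mathfrak{m}^k$ is a finitely generated $R$-module and satisfies $\mathfrak{m} \cdot \mathfrak{m}^k = \mathfrak{m}^k$; Nakayama's lemma, applicable because $\mathfrak{m}$ lies in the Jacobson radical of the local ring $R$, then gives $\mathfrak{m}^k = 0$.

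I expect no real obstacle; the only item requiring any care is confirming that Nakayama applies, and for that one only needs $\mathfrak{m}^k$ to be a finitely generated module (automatic since $R$ itself is finite as a set). Alternatively, one could observe that $R$ is Artinian (every descending chain of ideals is finite because $R$ is finite) and cite the standard fact that the Jacobson radical of an Artinian ring is nilpotent; this is slightly heavier machinery, so I would prefer the direct argument above.
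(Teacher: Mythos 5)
Your proof is correct and amounts to the same argument as the paper's, which simply cites the standard fact that any Artinian local ring has a nilpotent maximal ideal (Atiyah--Macdonald, Chapter 8); your chain-stabilization plus Nakayama argument is exactly the proof of that cited fact, specialized to the finite case. The extra verifications of the characteristic and residue field conditions are correct and harmless, though the paper takes them as already established.
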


\begin{proof} In fact any Artinian local ring has a nilpotent maximal ideal (see \cite[Chapter 8]{Atiyah}).
\end{proof}

When $R$ is finite, the following is well-known. We include a proof in our case for completeness.
\begin{prop} Under $\bigstar$, $1+\eme$ is a (not necessarily finite) $p$-group, i.e, every element has finite order a power of $p$.
\end{prop}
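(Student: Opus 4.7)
The plan is to show that for any $x \in \mathfrak{m}$ one can choose $k$ large enough that $(1+x)^{p^k} = 1$. I would begin by exploiting the hypotheses of $\bigstar$ to reduce to a finite binomial sum: since $\mathfrak{m}$ is nilpotent, fix $M$ with $\mathfrak{m}^M = 0$, so in particular $x^j = 0$ for all $j \geq M$. Then expand
\[
(1+x)^{p^k} - 1 \;=\; \sum_{j=1}^{p^k}\binom{p^k}{j}x^j \;=\; \sum_{j=1}^{M-1}\binom{p^k}{j}x^j,
\]
since all terms with exponent $\geq M$ already vanish. So the problem reduces to making each coefficient $\binom{p^k}{j}$ with $1 \le j < M$ divisible by $p^N$, because $R$ has characteristic $p^N$.

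Next I would invoke the standard $p$-adic valuation identity for binomial coefficients at a prime power index, namely $v_p\binom{p^k}{j} = k - v_p(j)$ for $1 \le j \le p^k$. (This can be proved quickly by writing $j\binom{p^k}{j} = p^k \binom{p^k-1}{j-1}$ together with the fact that $\binom{p^k-1}{j-1}$ is coprime to $p$, which itself follows from $\binom{p^k-1}{j-1} \equiv (-1)^{j-1} \pmod p$ via Lucas's theorem or a direct induction.) Since $v_p(j) \leq \log_p(M-1)$ for $1 \le j < M$, it follows that $v_p\binom{p^k}{j} \geq k - \log_p(M-1)$.

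I would then pick $k$ large enough that $k \geq N + \log_p(M-1)$; with that choice, every $\binom{p^k}{j}$ appearing in the truncated sum is divisible by $p^N$, hence vanishes in $R$. Thus $(1+x)^{p^k} = 1$, proving that the order of $1+x$ divides $p^k$, which gives the desired conclusion that $1+\mathfrak{m}$ is a $p$-group.

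The main obstacle, to the extent there is one, is the valuation lemma $v_p\binom{p^k}{j} = k - v_p(j)$; everything else is a direct binomial expansion combined with the nilpotence of $\mathfrak{m}$ and the characteristic hypothesis. Since this lemma is classical and short, the proof should proceed cleanly without further complications, and the argument works uniformly regardless of whether $R$ itself is finite, since at no point is finite cardinality used.
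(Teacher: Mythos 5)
Your proof is correct, but it takes a genuinely different route from the paper. You do a single direct computation: truncate the binomial expansion of $(1+x)^{p^k}$ using the nilpotence bound $\eme^M=0$, and then kill every surviving coefficient $\binom{p^k}{j}$, $1\le j<M$, with the classical valuation identity $v_p\binom{p^k}{j}=k-v_p(j)$, choosing $k\geq N+\log_p(M-1)$ so that each coefficient is divisible by $p^N$ and hence zero in $R$. The paper instead argues by induction on the exponent $N$ of the characteristic: it uses the multiplicative exact sequence $1\to 1+I\to 1+\eme_R\to 1+\eme_{R/I}\to 1$ with $I=p^NR$ to reduce the step from $p^{N+1}$ to $p^N$, checks directly that $(1+p^Nx)^p=1$ by a one-step binomial expansion, and handles the base case $N=1$ via the Frobenius identity $(1+x)^{p^r}=1+x^{p^r}$ in characteristic $p$. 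Your version buys an explicit, uniform bound on the order of $1+x$ (namely $p^{\lceil N+\log_p(M-1)\rceil}$) and avoids both the induction and the exact-sequence formalism, at the cost of importing the valuation lemma (which you correctly sketch via $j\binom{p^k}{j}=p^k\binom{p^k-1}{j-1}$); the paper's version needs only the crudest divisibility facts about $\binom{p}{i}$ and reuses machinery it has already set up, but gives the order bound less explicitly. Both arguments use only nilpotence of $\eme$ and the characteristic hypothesis, so both work in the full generality of $\bigstar$ without finiteness.
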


\begin{proof} Let $I\subset R$ ideal. We have the short exact sequence $ 0\longrightarrow I \longrightarrow R \longrightarrow R/I \longrightarrow 0$ that induces an exact sequence of multiplicative groups $ 1\longrightarrow 1+I \longrightarrow 1+\eme_R \longrightarrow 1+\eme_{R/I} \longrightarrow 1$ (Recall that $1+\eme_R$ is a group for $(R, \eme_R)$ local). Assume for the moment that's true when $N=1$ and let's prove by induction on $N$. Suppose holds for $N$, and if characteristic of $R$ is $p^{N+1}$, using the exact sequence with $I=p^NR$, it's enough to show $1+p^NR$ is a $p$-group. But this is easy: $(1+p^Nx)^p=1+{p\choose 1}p^Nx+ ... + {p\choose p-1}(p^Nx)^{p-1}+(p^Nx)^p=1$, since the middle $p-1$ coefficients vanish by divisibility of the binomial coefficients and since $kN\geq N$ for $1\leq k\leq p-1$, and the last vanishes since $pN\geq 2N\geq N+1$. Now, when $N=1$, i.e. $R$ has characteristic $p$ then $(1+x)^{p^r}=1+x^{p^r}$ and since $x$ is nilpotent $x^{p^r}=0$ for large $r$. This completes the induction and the proof.
\end{proof}

\begin{prop}  The exact sequence $1\longrightarrow 1+\eme_R  \longrightarrow R^{\times} \longrightarrow {\F_q^\times} \longrightarrow 1$ splits. Moreover there's a unique subgroup of $R^\times$ isomorphic to $\F_q^{\times}$.
\end{prop}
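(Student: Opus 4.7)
The plan is to build a Teichmuller map $\tau:\F_q^\times\to R^\times$ splitting the sequence, using the coprimality between the order of $\F_q^\times$, namely $q-1$, and the orders of elements of $1+\eme_R$, which by the previous proposition are $p$-power. The payoff is that the $(q-1)$-th power map $\phi:1+\eme_R\to 1+\eme_R$, $u\mapsto u^{q-1}$, is a bijection of abelian groups: injectivity follows because any element of its kernel has order dividing both $q-1$ and some $p^m$, hence order $1$; surjectivity follows because for $v\in 1+\eme_R$ of order $p^m$ one can pick an integer $a$ with $a(q-1)\equiv 1\pmod{p^m}$ and then $(v^a)^{q-1}=v$.

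Using $\phi$, I would construct $\tau$ as follows. Given $\bar{x}\in\F_q^\times$, choose any lift $x\in R^\times$; then $x^{q-1}\in 1+\eme_R$ (since $\bar{x}^{q-1}=1$), so by bijectivity of $\phi$ there is a unique $u\in 1+\eme_R$ with $u^{q-1}=x^{q-1}$, and I define $\tau(\bar{x})=xu^{-1}$. By construction $\tau(\bar{x})^{q-1}=1$ and $\tau(\bar{x})$ lifts $\bar{x}$. A quick check replacing $x$ by another lift $xv$ with $v\in 1+\eme_R$ gives that the corresponding $u$ becomes $uv$, so $\tau(\bar{x})$ is independent of the choice of lift.

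The key small lemma, and the only slightly delicate step, is the following uniqueness statement: \emph{if $a\in R^\times$ lifts $\bar{x}$ and $a^{q-1}=1$, then $a=\tau(\bar{x})$.} Indeed, $a\tau(\bar{x})^{-1}$ lies in $1+\eme_R$ (it lifts $1$) and has order dividing $q-1$, but $1+\eme_R$ is a $p$-group, so $a\tau(\bar{x})^{-1}=1$. From this uniqueness, multiplicativity of $\tau$ is automatic: both $\tau(\bar{x}\bar{y})$ and $\tau(\bar{x})\tau(\bar{y})$ lift $\bar{x}\bar{y}$ and have $(q-1)$-th power equal to $1$, so they coincide. Hence $\tau$ is a group homomorphism splitting the sequence.

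For the uniqueness of the subgroup isomorphic to $\F_q^\times$: if $H\subseteq R^\times$ is such a subgroup, then $H\cap(1+\eme_R)$ is simultaneously of $p$-power order (as a subgroup of the $p$-group $1+\eme_R$) and of order dividing $q-1$, hence trivial. Thus $H$ injects into $\F_q^\times$ and by cardinality is mapped isomorphically onto $\F_q^\times$, so its inverse gives a splitting; but by the uniqueness statement above, every splitting must send $\bar{x}$ to $\tau(\bar{x})$, and therefore $H=\tau(\F_q^\times)$. The main conceptual obstacle, should one arise, is only the bijectivity of $\phi$ in the possibly infinite setting of $\bigstar$, which is handled element-by-element using that each $u\in 1+\eme_R$ has finite $p$-power order.
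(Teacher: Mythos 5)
Your proof is correct, and it takes a genuinely different route from the paper. The paper only sketches the argument: it observes that the finite case is immediate from the coprimality of $q-1$ and $p$ (essentially Schur--Zassenhaus for abelian extensions) and then waves at a Zorn's Lemma argument for the general case of class $\bigstar$, with details omitted. You instead construct the splitting explicitly: the $(q-1)$-th power map on the abelian group $1+\eme_R$ is a bijection because every element has finite $p$-power order coprime to $q-1$ (this is exactly what the preceding proposition supplies), and inverting it on $x^{q-1}$ for a lift $x$ of $\bar{x}\in\F_q^\times$ produces the unique lift $\tau(\bar{x})$ killed by $q-1$. Your uniqueness lemma --- any unit lifting $\bar{x}$ with $(q-1)$-th power equal to $1$ must equal $\tau(\bar{x})$ --- then gives multiplicativity of $\tau$ and the uniqueness of the subgroup $T(R)$ in one stroke, since any subgroup isomorphic to $\F_q^\times$ meets $1+\eme_R$ trivially and consists of such elements. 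What your approach buys is a self-contained, element-by-element argument that handles the possibly infinite rings of class $\bigstar$ uniformly, with no appeal to Zorn's Lemma or to the finiteness reduction; it also exhibits the Teichmuller units concretely as the solutions of $a^{q-1}=1$ in $R^\times$, which is the classical description. The only stylistic remark is that your construction of $\tau(\bar{x})$ via a choice of lift could be shortcut slightly: once you know the $(q-1)$-th power map is bijective on $1+\eme_R$ and trivial on no nontrivial element of it, the set $\{a\in R^\times : a^{q-1}=1\}$ is already a subgroup mapping bijectively onto $\F_q^\times$; but your version is equally valid.
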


\begin{proof} This is immediate if $1+\eme_R$ were finite, since $\F_q^\times$ has order $q-1$ which is prime to $p$. In the general case, one can argue using a Zorn's Lemma argument. The key is that a finitely generated (abelian) $p$-group is actually finite. Details omitted.
\end{proof}

We obtain $R^{\times} \cong {\F_q^\times}\times (1+\eme_R)$. Denote $R^\times=T(R)\cdot (1+\eme_R)$ as an internal direct product of two subgroups, where $T(R)$ is the (unique) subgroup of $R^\times$ isomorphic to $\F_q^{\times}$.

\begin{definition} $T(R)$ is called the group of Teichmuller units of $R$. 
\end{definition}

The rings considered here might not be finite, e.g. the ring $R=\Z/p^N\oplus M$ where $M$ is \emph{any} $\Z/p^N$-module with multiplication making $M$ have zero square, i.e. $(x, m)(y, n)=(xy, xn+ym)$; this satisfies $\bigstar$ since $\eme_R^i=(p\Z/p^N)^i\oplus p^{i-1}M$ and taking $i=N+1$, this is zero. 

But by the next propositions, the rings of class $\bigstar$ are not that far from being finite.

\begin{prop}\label{prop-6} The ring generated by $T(R)$ is finite. 
\end{prop}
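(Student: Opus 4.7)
The plan is to pin down the subring generated by $T(R)$ explicitly and then bound its size. Since $T(R) \cong \F_q^\times$ is cyclic of order $q-1$, pick a generator $\tau \in T(R)$, so that $T(R) = \{1, \tau, \tau^2, \ldots, \tau^{q-2}\}$ and $\tau^{q-1}=1$. The subring of $R$ generated by $T(R)$ is then the same as the subring generated by the single element $\tau$, namely $\Z[\tau]$, i.e.\ the image of the evaluation map $\varphi \colon \Z[X] \to R$ sending $X \mapsto \tau$.

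Next I would observe that $\varphi$ factors through a manifestly finite quotient. Because $R$ has characteristic $p^N$, the map $\varphi$ kills $p^N$, and because $\tau^{q-1} = 1$ it also kills $X^{q-1}-1$. Hence $\varphi$ factors through $\Z[X]/(p^N,\, X^{q-1}-1)$, which as an abelian group is generated by the $q-1$ classes $1, X, \ldots, X^{q-2}$, each of additive order dividing $p^N$. This quotient therefore has at most $p^{N(q-1)}$ elements, and so does its image $\Z[\tau]$.

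Since $\Z[\tau]$ contains $T(R)$ and every subring of $R$ containing $T(R)$ must contain $\tau$ (hence all of $\Z[\tau]$), we conclude that the ring generated by $T(R)$ equals $\Z[\tau]$ and is finite, completing the proof. There is really no obstacle here beyond recording the two inputs used, namely the cyclicity of $\F_q^\times$ and the fact that $R$ has characteristic $p^N$; both are already established. It is worth noting as a sanity check that this bound $p^{N(q-1)}$ is compatible with the expected answer that the ring generated by $T(R)$ should be (a copy of) the unramified coefficient ring of $R$, which is what will let one deduce Cohen-type structure results in the subsequent sections.
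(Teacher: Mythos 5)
Your proof is correct and is essentially the paper's argument in disguise: the paper realizes the subring generated by $T(R)$ as the image of the evaluation map from the finite group ring $(\Z/p^N)[T(R)]$, and your quotient $\Z[X]/(p^N,\ X^{q-1}-1)$ is exactly that group ring presented via a generator of the cyclic group $T(R)\cong \F_q^\times$. The only difference is that you invoke cyclicity to reduce to a single generator, which the paper does not need (finiteness of $T(R)$ alone suffices), but this costs nothing and also yields the explicit bound $p^{N(q-1)}$.
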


\begin{proof} Since $T(R)$ is a finite group, the ring generated by $T(R)$ is the image under evaluation of the group ring $(\Z/p^N)[T(R)]$, which is a finite ring.
\end{proof}

\begin{prop} Let $S\subseteq R$ be generated (as ring) by a finite number of elements. Then $S$ is finite.
\end{prop}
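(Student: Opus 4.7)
The plan is to reduce the problem to a finiteness statement about a $\Z/p^N$-algebra of nilpotents, which one handles directly using nilpotence of $\eme$.

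First I would let $R_0$ denote the subring of $R$ generated by $T(R)$, which is finite by Proposition \ref{prop-6}. Note $R_0$ contains the prime subring $\Z/p^N$ as well. It suffices to prove the larger ring $S' := R_0[s_1,\dots,s_k]$ (the $R_0$-subalgebra of $R$ generated by the chosen generators of $S$) is finite, since $S \subseteq S'$.

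Next I would simplify the generators. For any $r \in R$, its residue $\bar r \in \F_q$ is either $0$ or in $\F_q^\times$; in the first case $r \in \eme$, and in the second case the Teichmuller lift $t \in T(R)$ of $\bar r$ satisfies $r - t \in \eme$. Applying this to each $s_i$, we can write $s_i = t_i + m_i$ with $t_i \in T(R) \cup \{0\} \subseteq R_0$ and $m_i \in \eme$. Consequently $S' = R_0[m_1,\dots,m_k]$, so we have reduced to showing that the $R_0$-subalgebra of $R$ generated by finitely many elements of $\eme$ is finite.

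The final step exploits nilpotence. Since $\eme$ is nilpotent, fix $L$ with $\eme^L = 0$. Every monomial $m_{i_1}\cdots m_{i_d}$ of degree $d \geq L$ lies in $\eme^L$ and hence vanishes, so as an $R_0$-module $R_0[m_1,\dots,m_k]$ is spanned by the finitely many monomials of degree $< L$ in the $m_i$. A finitely generated module over the finite ring $R_0$ is a finite set, so $S'$ is finite, and therefore $S$ is finite.

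The main (minor) obstacle is the reduction step: ensuring that every element of $R$ can be written as ``Teichmuller lift plus element of $\eme$'', which cleanly absorbs the residue-field part of each generator into $R_0$ and leaves a genuinely nilpotent residual to be handled by $\eme^L = 0$.
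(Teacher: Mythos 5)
Your proof is correct and follows essentially the same route as the paper: absorb the finite ring generated by $T(R)$, replace each generator by a nilpotent element of $\eme$ using the decomposition $r=t+m$, and bound the monomials by nilpotence so that the ring is a finitely generated module over a finite ring. The only (cosmetic) difference is that you invoke $\eme^L=0$ globally to kill high-degree monomials, where the paper uses the individual nilpotence $y_i^m=0$ of a generating set enlarged to be closed under multiplication by the base ring.
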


\begin{proof} By possibly enlarging $S$, we adjoin the finite group $T(R)$ and we can assume $S$ is generated by $R_1$ (= subring generated by $T(R)$) and a finite number of nilpotent elements $x_1, ..., x_a$. We can enlarge this finite set by multiplying by the elements of $R_1$, obtaining a finite set $y_1, ... , y_b$ of nilpotent elements closed under multiplication by $R_1$. The set of finite products of the $y_i$ is finite, given that there's $m$ such that $y_i^m=0$ for all $i$, and so the subspace $V$ of $\eme_R$ spanned by them is finite (= finitely generated subgroup of the $p$-group $\eme_R$). Finally notice that $S$, the ring generated by $R_1$ and $y_i$, is the sum $S=R_1+V$ which is finite.
\end{proof}

\begin{obs} For $L\subseteq R$, one can consider the residue field of $L$ naturally as a subfield of the residue field of $R$. Indeed, both $L$ and $R$ are local rings whose maximal ideals are the set of nilpotent elements, and so $\eme_L=\eme_R\cap L$, and $L/\eme_L = L/(\eme_R\cap L) \subseteq R/\eme_R$ in a natural manner.
\end{obs}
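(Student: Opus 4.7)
The plan is to verify that $L$ is itself local with maximal ideal $\eme_R\cap L$, and that this maximal ideal coincides with the set of nilpotent elements of $L$; the natural inclusion of residue fields then drops out immediately. I would organize the argument in three moves: identify the candidate for $\eme_L$ via a residue map, show the quotient is a field, and finally verify that $L\setminus(\eme_R\cap L)$ consists of units of $L$ (not merely of $R$).

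First I would consider the composition $L\hookrightarrow R \twoheadrightarrow R/\eme_R=\F_q$. Its kernel is the ideal $I:=L\cap \eme_R$ of $L$, and the induced injection $L/I\hookrightarrow \F_q$ embeds $L/I$ into a finite field. Hence $L/I$ is a finite integral domain, and so a field; consequently $I$ is a maximal ideal of $L$. Moreover, since $\eme_R$ is nilpotent, every element of $I$ is nilpotent in $L$, and conversely any nilpotent in $L$ lies in $\eme_R$ (since $\eme_R$ equals the nilradical of $R$, it being nilpotent and maximal), hence in $I$. So $I$ is exactly the set of nilpotents of $L$.

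The substantive step, which I expect to be the main obstacle, is showing that $L$ is local with maximal ideal $I$, i.e. that every $x\in L\setminus I$ is a \emph{unit in $L$} (not just in $R$). Take such an $x$; its image in $\F_q$ lies in $\F_q^\times$, so $x^{q-1}\equiv 1\pmod{\eme_R}$, say $x^{q-1}=1+m$ with $m\in \eme_R\cap L=I$. Because $\eme_R$ is nilpotent, $m$ is nilpotent with, say, $m^k=0$, and therefore
\[
(1+m)^{-1}=1-m+m^2-\cdots+(-1)^{k-1}m^{k-1}
\]
is a polynomial in $m$ with integer coefficients and lies in $L$. Then $x^{-1}=x^{q-2}(1+m)^{-1}\in L$, as required. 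Hence every element outside $I$ is a unit of $L$, so $(L,I)$ is local with $\eme_L=I=\eme_R\cap L$.

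Combining these observations, the short chain $L/\eme_L=L/(\eme_R\cap L)\hookrightarrow R/\eme_R$ is the desired natural inclusion of residue fields, completing the argument.
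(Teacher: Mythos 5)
Your argument is correct and follows the same route the paper intends: identify $\eme_R\cap L$ as the set of nilpotents of $L$, show $L$ is local with that maximal ideal, and pass to the induced inclusion of quotients. The only difference is that you supply the detail the paper leaves implicit --- the verification via $x^{q-1}=1+m$ and the finite geometric series that units of $R$ lying in $L$ are already units of $L$ --- which is exactly the right way to justify the paper's bare assertion that $L$ is local with maximal ideal equal to its nilpotents.
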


For any subfield $\F\subseteq \F_q$, denote $\tau(\F^\times)$ the (unique) subgroup of $T(R)$ that maps to $\F^\times$. We have the following relations for subrings and subgroups: $L\subseteq R$ is a subring, $T(L)\subseteq T(R)$ is a subgroup and if $\F$ is the residue field of $L$, then $T(L)=\tau(\F^\times)$ under the identification above $\F= L/\eme_L\subseteq R/\eme_R=\F_q$.

\begin{lem} Two subrings $L_1$ and $L_2$ have same residue field iff $T(L_1)=T(L_2)$.
\end{lem}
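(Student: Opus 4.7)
My plan is to use the explicit description $T(L)=\tau(\F_L^\times)$ recorded in the remarks just before the lemma, so that the equivalence reduces to a statement about subgroups of $\F_q^\times$.

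First I would unwind definitions. For any subring $L\subseteq R$, the observation preceding the lemma gives a natural embedding $\F_L := L/\eme_L \hookrightarrow R/\eme_R = \F_q$, and the preceding paragraph identifies $T(L)$ with $\tau(\F_L^\times)$, the unique subgroup of $T(R)$ mapping isomorphically onto $\F_L^\times$ under the canonical isomorphism $T(R)\xrightarrow{\sim}\F_q^\times$. This is the whole engine of the proof.

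For the forward direction, if $\F_{L_1}=\F_{L_2}$ as subfields of $\F_q$, then $\tau(\F_{L_1}^\times)=\tau(\F_{L_2}^\times)$ by construction of $\tau$, so $T(L_1)=T(L_2)$. For the converse, if $T(L_1)=T(L_2)$, then applying the isomorphism $T(R)\xrightarrow{\sim}\F_q^\times$ yields $\F_{L_1}^\times=\F_{L_2}^\times$ as subgroups of $\F_q^\times$. Adjoining $0$ recovers the subfields, giving $\F_{L_1}=\F_{L_2}$; equivalently, a finite subgroup of $\F_q^\times$ determines its subfield uniquely since subfields of a finite field are classified by order.

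I do not anticipate a serious obstacle: both implications are essentially bookkeeping on top of the uniqueness statement in the previous proposition (existence of a unique copy of $\F_q^\times$ inside $R^\times$) and the identification $T(L)=\tau(\F_L^\times)$. The only point worth a sentence of care is to confirm that the inclusion $T(L)\subseteq T(R)$ really does correspond to the inclusion $\F_L^\times\subseteq \F_q^\times$ under the two canonical isomorphisms; this is immediate from the commutative diagram formed by the reduction maps $L^\times\to \F_L^\times$ and $R^\times\to\F_q^\times$ together with the inclusion $L^\times\hookrightarrow R^\times$, and uniqueness of the Teichmuller lift.
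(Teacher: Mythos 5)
Your proof is correct and follows the same route as the paper: both reduce the statement to the identification $T(L)=\tau(\F_L^\times)$, with the forward direction immediate and the converse recovered by adjoining $0$ to $\F_L^\times$ to reconstruct the subfield. No issues.
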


\begin{proof} We have by above $T(L)=\tau((L/\eme_L)^\times)$. So the forward direction is clear and the backward direction follows from the fact that a field satisfies $\F = \F^\times \cup \{0\}$. 
\end{proof}

\begin{thm} Given a subfield $\F \subseteq \F_q$ there is a maximal subring $R^{[\F]}$ and a minimal subring $R_{[\F]} \subseteq R$ whose residue fields are $\F$. Moreover $R_{[\F]}$ is finite. 
\end{thm}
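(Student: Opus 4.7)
The plan is to write both subrings down in closed form, using the residue map for the maximal one and the Teichmuller lifts for the minimal one. Let $\pi : R \to R/\eme_R = \F_q$ denote the quotient map. My first step is to set
\[
R^{[\F]} := \pi^{-1}(\F).
\]
Since $\F$ is a subring of $\F_q$, this pulls back to a subring of $R$; it contains $\eme_R = \pi^{-1}(0)$, which is nilpotent, so $R^{[\F]}$ is local with maximal ideal $\eme_R$ and residue field $R^{[\F]}/\eme_R = \F$. For maximality, any subring $L\subseteq R$ with residue field $\F$ satisfies $\pi(L)=\F$ (under the canonical identification of the Observation), so $L \subseteq \pi^{-1}(\F) = R^{[\F]}$.

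Next I would define $R_{[\F]}$ to be the subring of $R$ generated by $\tau(\F^\times)$. Its image under $\pi$ is the subring of $\F_q$ generated by $\F^\times$, and since $\F = \F^\times \cup \{0\}$ this image is exactly $\F$. The kernel $R_{[\F]}\cap \eme_R$ is nilpotent and the quotient is the field $\F$, so $R_{[\F]}$ is local with residue field $\F$. For minimality, I would appeal to the preceding lemma: if $L\subseteq R$ has residue field $\F$ then $T(L)=\tau(\F^\times)$ as subgroups of $R^\times$, because any prime-to-$p$ subgroup of $R^\times$ lies in $T(R)$ in view of the splitting $R^\times = T(R)\cdot(1+\eme_R)$, and the unique such subgroup mapping onto $\F^\times$ is $\tau(\F^\times)$. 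Consequently $\tau(\F^\times)\subseteq L$, and so $R_{[\F]}\subseteq L$.

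For the finiteness of $R_{[\F]}$, I would simply re-run the proof of Proposition~\ref{prop-6} with the finite subgroup $\tau(\F^\times)$ in place of $T(R)$: the subring generated by $\tau(\F^\times)$ is the image under evaluation of the finite group ring $(\Z/p^N)[\tau(\F^\times)]$, hence finite.

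There is no real obstacle here; the only points that need a moment of care are (i) checking that the residue field of $R^{[\F]}$, respectively of $R_{[\F]}$, is exactly $\F$ and not something larger, which in both cases reduces to the identity $\F=\F^\times\cup\{0\}$, and (ii) identifying $T(L)$ with $\tau(\F^\times)$ inside $R^\times$ via the prime-to-$p$ argument above, so that the lemma can be applied to compare subrings of $R$ rather than merely abstract groups.
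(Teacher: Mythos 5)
Your proposal is correct and follows essentially the same route as the paper: $R^{[\F]}$ is the preimage $\pi^{-1}(\F)$, and $R_{[\F]}$ is the subring generated by $\tau(\F^\times)$ (equivalently its $\Z/p^N\Z$-span), with minimality coming from $T(L)=\tau(\F^\times)\subseteq L$ and finiteness from the surjection out of the finite group ring $(\Z/p^N)[\F^\times]$. The extra details you supply (locality of the two subrings, the prime-to-$p$ identification of $T(L)$) are consistent with the facts the paper establishes just before this theorem.
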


\begin{proof} Given a subfield $\F$, the maximal subring having that residue field is $R^{[\F]}$, the inverse image of $\F\subseteq \F_q$ under the map $R\to R/\eme_R$. Now, let $S$ be subring with residue field $\F$. Since $T(S)=\tau(\F^\times)$, and $T(S)\subseteq S$, $S$ contains the subring $R_{[\F]}$ generated as $\Z/p^N\Z$-module by $\tau(\F^\times)$ which only depends on $\F$. This is a ring since it's the image under evaluation of the group ring $(\Z/p^N)[\F^{\times}]$, and so it's finite. Clearly, $R_{[\F]}$ has residue field $\F$, and $R_{[\F]} \subseteq S$.
\end{proof}

 Hence the set of subrings of $R$ with fixed residue field $\F$ has a maximum element $R^{[\F]}$ and a minimum element $R_{[\F]}$.  

\begin{obs} 
\begin{itemize}
\item $R_{[\F_p]} = \Z/p^N\Z$, the base ring. 
\item $R^{[\F_p]}$ is the ring generated by $1$ and the maximal ideal $\eme_R$.
\item By definition $R^{[\F_q]}=R$. 
\item We'll determine the structure of $R_{[\F_q]}$ next.
\end{itemize}
\end{obs}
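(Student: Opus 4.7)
The plan is to unpack the constructions of $R_{[\F]}$ and $R^{[\F]}$ from the preceding theorem and verify each bullet in turn; none are deep, but each needs a short check.

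For the first bullet, I would observe that since $R$ has characteristic $p^N$, the image of $\Z \hookrightarrow R$ is exactly $\Z/p^N\Z \subseteq R$, and this is the smallest subring of $R$: every subring contains $1$, hence the additive subgroup it generates. Tracking where $1$ goes, the image of $\Z/p^N\Z$ in $R/\eme_R = \F_q$ is the prime subfield $\F_p$. So $\Z/p^N\Z$ is itself a subring with residue field $\F_p$, and since it sits inside every subring whatsoever (in particular every subring with residue field $\F_p$), the defining minimality of $R_{[\F_p]}$ forces $R_{[\F_p]}=\Z/p^N\Z$.

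For the second bullet, $R^{[\F_p]}$ is by construction the preimage of $\F_p \subseteq \F_q$ under the residue map $R \to R/\eme_R$. Since $\F_p$ is the additive subgroup $\Z\cdot 1 \subseteq \F_q$, the preimage is $\Z\cdot 1 + \eme_R = \Z/p^N\Z + \eme_R$, which is manifestly the subring generated by $1$ and the additive group $\eme_R$: closure under addition is trivial, and closure under multiplication follows because $\eme_R$ is an ideal of $R$ and $\Z/p^N\Z$ is a subring. The third bullet is a tautology: $R^{[\F_q]}$ is the preimage of $\F_q \subseteq \F_q$, i.e.\ all of $R$. The fourth bullet is a forward reference, not a claim to prove.

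There is no real obstacle; the only point worth being careful about is, in the first bullet, confirming that the residue field of $\Z/p^N\Z \subseteq R$ really is $\F_p$ and not some larger subfield of $\F_q$. This is immediate from the fact that the composite $\Z \to R \to \F_q$ factors through the prime subfield generated by $1$.
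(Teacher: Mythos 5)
Your proposal is correct; the paper states this observation without proof, and your verification is exactly the intended unpacking of the definitions of $R_{[\F]}$ (minimal subring with residue field $\F$, contained in every such subring) and $R^{[\F]}$ (preimage of $\F$ under $R\to R/\eme_R$). In particular, your minimality argument for the first bullet neatly sidesteps having to check directly that $\tau(\F_p^\times)$ lies in $\Z/p^N\Z\cdot 1$, which would otherwise be the one nontrivial point.
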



\subsection{The ring  $R_{[\F_q]}$}

 In this section we assume again that our rings are finite. We need part of the structure theory of Galois rings, which are the Galois extensions of $\Z/p^N$. Denote $R(N, n)$ the ``Galois" ring of characteristic $p^N$ and residue fields $\F_q$ with $q=p^n$. The following is a characterization of them (see \cite{Ganske}, \cite{MacDonald}):

\begin{thm} Let $R$ be a ring of characteristic $p^N$ whose maximal ideal is $pR$ ($R$ is an unramified $\Z/p^N$-algebra). Then $R$ is isomorphic to the Galois ring $R(N,n)$, where $q=p^n=\#(\F_q)$.
\end{thm}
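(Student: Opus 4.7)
The strategy is to realize $R$ as $(\Z/p^N\Z)[\zeta]$ for a Teichmuller unit $\zeta$, find a monic polynomial of degree $n$ over $\Z/p^N\Z$ killing $\zeta$, and identify the resulting quotient with the Galois ring $R(N,n)$. By Section \ref{section-4} there is a subgroup $T(R)\subseteq R^\times$ mapping isomorphically onto $\F_q^\times$. Choose $\zeta\in T(R)$ whose image $\bar\zeta\in\F_q$ generates $\F_q^\times$, and set $S=(\Z/p^N\Z)[\zeta]$. The composition $S\hookrightarrow R\twoheadrightarrow\F_q$ is surjective because $\bar\zeta$ generates $\F_q$ over $\F_p$, so $R=S+\eme_R=S+pR$. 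Iterating and using $p^NR=0$ gives $R=S+p^NR=S$, so $R=(\Z/p^N\Z)[\zeta]$.

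Next I produce a defining polynomial for $\zeta$. Let $\bar f(x)\in\F_p[x]$ be the minimal polynomial of $\bar\zeta$; it is monic of degree $n$ and divides the separable polynomial $x^{q-1}-1$, say $x^{q-1}-1=\bar f(x)\,\bar g(x)$ with $\gcd(\bar f,\bar g)=1$. Hensel's lemma over $\Z/p^N\Z$ lifts this coprime factorization to $x^{q-1}-1=f(x)\,g(x)$ in $(\Z/p^N\Z)[x]$ with monic $f,g$ reducing to $\bar f,\bar g$. Since $\zeta^{q-1}=1$ we have $f(\zeta)\,g(\zeta)=0$; because $\bar g(\bar\zeta)\ne 0$, the factor $g(\zeta)\notin\eme_R$ is a unit, forcing $f(\zeta)=0$. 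This yields a surjective ring map $\pi:T:=(\Z/p^N\Z)[x]/(f(x))\twoheadrightarrow R$ sending $x\mapsto\zeta$.

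It remains to check $\pi$ is injective. Since $f$ is monic of degree $n$, $T$ is free of rank $n$ over $\Z/p^N\Z$; moreover $T/pT=\F_p[x]/(\bar f)=\F_q$ is a field and every maximal ideal of $T$ contains the nilpotent element $p$, so $T$ is local with maximal ideal $pT$ and residue field $\F_q$ --- this is precisely a standard presentation of $R(N,n)$. The ideals of $T$ are then totally ordered $0\subsetneq p^{N-1}T\subsetneq\cdots\subsetneq pT\subsetneq T$, so $\ker\pi=p^kT$ for some $k$; if $k<N$ then $p^k=0$ in $R$, contradicting $\mathrm{char}(R)=p^N$. Hence $\ker\pi=0$ and $R\cong T\cong R(N,n)$. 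The one nontrivial input is the polynomial-factorization form of Hensel's lemma used to produce $f$; everything else is bookkeeping with the filtration by powers of $p$.
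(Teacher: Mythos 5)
Your proof is correct, but there is nothing in the paper to compare it against: the paper states this theorem as a known characterization of Galois rings and simply cites \cite{Ganske} and \cite{MacDonald}, giving no proof. Your argument is a valid self-contained proof along the classical lines: generate $R$ over $\Z/p^N\Z$ by a Teichmuller lift $\zeta$ of a generator of $\F_q^{\times}$ (the step $R=S+pR\Rightarrow R=S+p^kR$ for all $k$, together with $p^NR=0$, correctly upgrades surjectivity modulo $\eme_R$ to $R=S$), use Hensel lifting of the coprime factorization $x^{q-1}-1=\bar f\,\bar g$ over $\Z/p^N\Z$ to manufacture a monic degree-$n$ relation $f(\zeta)=0$, and then compare $R$ with $T=(\Z/p^N\Z)[x]/(f)$. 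The one assertion you lean on without proof is that the ideals of $T$ are exactly $0\subsetneq p^{N-1}T\subsetneq\dots\subsetneq pT\subsetneq T$; this requires knowing that every nonzero element of $T$ is $p^k$ times a unit (write it in the basis $1,x,\dots,x^{n-1}$ and factor out the minimal $p$-adic valuation of the coefficients), which is in effect the paper's Proposition \ref{prop-29} applied to $T$. If you prefer to avoid that, you can finish by counting instead: the filtration $R\supseteq pR\supseteq\dots\supseteq p^NR=0$ has every graded piece a nonzero cyclic $\F_q$-module (Nakayama rules out premature collapse since $\mathrm{char}\,R=p^N$), so $\#R=q^N=\#T$ and the surjection $\pi$ is automatically injective. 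Either way the argument is complete and fits the paper's stated preference for elementary, explicit proofs over invoking the structure theory it cites.
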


We need the following known result, whose proof we include for completeness: 

\begin{prop}\label{prop-11} Let $(A,\eme)$ local of characteristic $p$, and $A/\eme = \F_q$ be the residue field. Then there's an embedding $\F_q\subseteq A$.
\end{prop}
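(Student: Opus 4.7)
The plan is to realize $\mathbb{F}_q$ inside $A$ as the fixed set of a power of Frobenius. Because $A$ has characteristic $p$, the map $\phi\colon A\to A$, $a\mapsto a^p$, is a ring endomorphism, and hence so is $\phi^n$ where $q=p^n$. Define
\[
F \;=\; \{\,a\in A : a^q = a\,\},
\]
which is the equalizer of $\phi^n$ and the identity, so $F$ is a subring of $A$ (containing $0$ and $1$). This closure under addition is the one ``miracle'' that makes the argument work, and it is the crucial reason the hypothesis of characteristic $p$ is needed.

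Next I would show $F$ is a field. If $a\in F$ with $a\neq 0$, then $a(a^{q-1}-1)=0$. Since $A$ is local with nilpotent maximal ideal $\mathfrak{m}$ (we are in the class $\bigstar$, and in particular $A$ is finite here), every zero divisor of $A$ lies in $\mathfrak{m}$ and is therefore nilpotent. If $a\in\mathfrak{m}$, iterating $a^q=a$ gives $a=a^{q^k}$ for every $k\geq 1$, but eventually $a^{q^k}=0$, forcing $a=0$, a contradiction. Hence $a^{q-1}=1$, so $a$ is a unit in $A$ with inverse $a^{q-2}\in F$. Thus $F$ is a field.

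Finally, I would identify $F$ with $\mathbb{F}_q$. The projection $A\to A/\mathfrak{m}=\mathbb{F}_q$ restricts to a ring map $F\to\mathbb{F}_q$ whose kernel is a proper ideal of the field $F$, hence zero, giving an embedding $F\hookrightarrow\mathbb{F}_q$ and the bound $|F|\leq q$. For the reverse inequality, the Teichmuller units already constructed above form a subgroup $T(A)\subseteq A^\times$ of order $q-1$ consisting of elements satisfying $t^{q-1}=1$, i.e.\ $t^q=t$; so $T(A)\cup\{0\}\subseteq F$, giving $|F|\geq q$. Combined, $F\cong\mathbb{F}_q$, providing the desired embedding $\mathbb{F}_q\hookrightarrow A$. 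The only genuinely non-routine point is the closure of $F$ under addition, which rests squarely on characteristic $p$; everything else is a short check using nilpotence of $\mathfrak{m}$ and the existence of Teichmuller units.
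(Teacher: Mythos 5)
Your proof is correct, but it reaches the coefficient field by a different route than the paper. Both arguments are built on the Frobenius endomorphism $\varphi(a)=a^p$, but the paper takes the \emph{image} of a high power $\varphi^t$: since $\eme^{p^t}=0$, the kernel of $\varphi^t$ is exactly $\eme$ (the nilpotents are precisely the elements killed by enough Frobenius iterations), so $\mathrm{Im}(\varphi^t)\cong A/\eme=\F_q$ by the first isomorphism theorem --- two lines, using only nilpotence of $\eme$. You instead take the \emph{fixed subring} $F=\{a\in A: a^q=a\}$ of $\varphi^n$, verify it is a field via nilpotence, and then pin down $|F|=q$ by sandwiching: the projection to the residue field gives $|F|\le q$, and the Teichmuller units give $|F|\ge q$. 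Your route is sound (and the two constructions produce the same subring, since $\mathrm{Im}(\varphi^t)\subseteq F$ and both have $q$ elements), but it imports the Teichmuller-unit machinery of Section 3 to get surjectivity onto $\F_q$, which the paper's argument gets for free from the isomorphism theorem; that import is legitimate here because the splitting of $R^\times$ was established before this proposition and does not depend on it, so there is no circularity. If you wanted your fixed-point formulation to be self-contained, you could replace the Teichmuller step by noting that for any lift $a$ of $\bar a\in\F_q$ the element $a^{q^t}$ lies in $F$ for large $t$ (because $a^{q^{t+1}}-a^{q^t}=(a^q-a)^{q^t}$ and $a^q-a\in\eme$ is nilpotent) and still reduces to $\bar a$.
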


\begin{proof} The Frobenius map is an $\F_p$-algebra endomorphism $\varphi: A\to A$. Since $\eme^{p^t}=0$ for large enough $t$, $Ker(\varphi^t)=\eme$, and consequently, $\F_q=A/\eme\subseteq A$.
\end{proof}

\begin{thm}\label{thm-12} Let $R$ be a ring that's generated as $\Z/p^N\Z$-module by $T(R)$. Then $R$ is isomorphic to the Galois ring $R(N,n)$, where $q=p^n=\#(\F_q)$.
\end{thm}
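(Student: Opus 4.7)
The plan is to reduce to the characterization theorem just stated: it suffices to show that the maximal ideal of $R$ is exactly $pR$, i.e.\ that $R$ is an unramified $\Z/p^N$-algebra. Once this is established, the previous theorem immediately gives $R \cong R(N,n)$.

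First, I would note that $pR \subseteq \eme_R$ is automatic, since $p$ lies in the maximal ideal of any ring of characteristic $p^N$ with residue field of characteristic $p$. The work is to prove the reverse inclusion, and I would do this by reducing mod $p$. Consider the quotient $R/pR$; it is a local ring of characteristic $p$ with residue field $\F_q$ and nilpotent maximal ideal $\eme_R/pR$. By Proposition \ref{prop-11}, $R/pR$ contains a distinguished copy of $\F_q$, and the unit group of this subfield is a subgroup of $(R/pR)^{\times}$ of order $q-1$ mapping isomorphically to $\F_q^{\times}$ under reduction. By the uniqueness statement in Proposition 3 applied to $R/pR$, this subgroup must coincide with the Teichmuller subgroup $T(R/pR)$, so that $T(R/pR)\cup\{0\}=\F_q\subseteq R/pR$.

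Next I would track the Teichmuller units through reduction. The composition $T(R)\hookrightarrow R^{\times}\twoheadrightarrow (R/pR)^{\times}$ is injective, because reduction mod $\eme_R$ factors through $R/pR$ and is already injective on $T(R)$. Thus the image of $T(R)$ in $R/pR$ is a subgroup of order $q-1$ mapping isomorphically onto $\F_q^{\times}$, and again by uniqueness this image equals $T(R/pR)$. By hypothesis $R$ is generated as a $\Z/p^N$-module by $T(R)$, so passing to the quotient, $R/pR$ is generated as an $\F_p$-module by $T(R/pR)$. Since $T(R/pR) \subseteq \F_q \subseteq R/pR$, this forces $R/pR = \F_q$, hence $\eme_R/pR = 0$, i.e.\ $\eme_R = pR$.

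At that point the previous theorem characterizing $R(N,n)$ applies and finishes the proof. The only real content is step two: identifying the image of $T(R)$ in $R/pR$ with the copy of $\F_q^{\times}$ furnished by Proposition \ref{prop-11}. This is the main (minor) obstacle, and it is handled cleanly by invoking the uniqueness of the Teichmuller subgroup established in Proposition 3.
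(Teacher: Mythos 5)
Your proposal is correct and follows essentially the same route as the paper: reduce to showing $\eme_R = pR$ by passing to $R/pR$, embed $\F_q$ there via Proposition \ref{prop-11}, and identify the image of $T(R)$ with that copy of $\F_q^{\times}$ using the uniqueness of the Teichmuller subgroup. The only difference is that you spell out explicitly the step the paper compresses into ``$T(R)\subseteq \F_q$ in this way,'' namely that the image of $T(R)$ in $R/pR$ equals $T(R/pR)$ and lands inside the embedded field.
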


\begin{proof} By above we only need to show it's maximal ideal is $pR$, i.e., to show that $R/pR$ is a field. But $R/pR$ is also a ring that's generated by $T(R)$ as $\Z/p^N \Z$-module (in fact as vector space over $\Z/p\Z=\F_p$). So we only need to show such a ring is a field. So assume $R$ has characteristic $p$ generated as $\F_p$-vector space by $T(R)$. By proposition \ref{prop-11}, $R$ contains a copy of its residue field $\F_q$. But $T(R)\subseteq \F_q$ in this way and the $\F_p$-vector space generated is in $\F_q$, i.e. $R\subseteq \F_q$ so $\F_q=R$.
\end{proof}

\begin{thm}[Structure of the ring  $R_{[\F_q]}$] $R_{[\F_q]}\cong R(N,n)$.
\end{thm}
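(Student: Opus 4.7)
The plan is to apply Theorem \ref{thm-12} directly to $R_{[\F_q]}$, so the task reduces to verifying that $R_{[\F_q]}$ is generated as a $\Z/p^N\Z$-module by its own Teichmuller units.

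First I would recall the explicit construction of $R_{[\F_q]}$ from the preceding theorem: it is defined as the $\Z/p^N\Z$-submodule of $R$ generated by $\tau(\F_q^\times)$, and this set is closed under multiplication because it is (the image of) a group ring, which is why $R_{[\F_q]}$ is a subring at all. So by construction, $R_{[\F_q]}$ is generated as a $\Z/p^N\Z$-module by $\tau(\F_q^\times) \subseteq R^\times$.

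Next I would identify $\tau(\F_q^\times)$ with $T(R_{[\F_q]})$. Since $R_{[\F_q]}$ has residue field exactly $\F_q$, the lemma characterizing subrings with equal residue field in terms of Teichmuller groups gives $T(R_{[\F_q]}) = T(R) = \tau(\F_q^\times)$ (the Teichmuller group of $R_{[\F_q]}$, viewed inside $R_{[\F_q]}^\times \subseteq R^\times$, is the unique lift of $\F_q^\times$, which is $\tau(\F_q^\times)$).

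Finally, $R_{[\F_q]}$ is a ring of characteristic $p^N$ (it contains $1$ and is a submodule of $R$) whose Teichmuller units generate it as $\Z/p^N\Z$-module. Theorem \ref{thm-12} then applies verbatim and yields $R_{[\F_q]} \cong R(N,n)$. There is no real obstacle here: the content of the statement has been packaged into the preceding results, so the proof amounts to matching definitions and invoking Theorem \ref{thm-12}.
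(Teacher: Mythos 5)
Your proof is correct and follows the paper's approach exactly: the paper's own proof is the single observation that $R_{[\F_q]}$ is generated as a $\Z/p^N\Z$-module by $T(R)$, after which Theorem \ref{thm-12} applies. You merely spell out the (worthwhile) intermediate check that $T(R_{[\F_q]})=\tau(\F_q^\times)=T(R)$, which the paper leaves implicit.
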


\begin{proof} $R_{[\F_q]}$ is generated as $\Z/p^N\Z$-module by $T(R)$.
\end{proof}


\subsection{Remark on coefficient rings}

The results of section 3.1 imply that $R_{[\F_q]}$ is a coefficient ring for $R$, whose existence is guaranteed for any complete local ring by the Cohen structure theorem (see \cite[tag 0323]{SP}). Here's the statement of what being a coefficient ring means:

\begin{cor} The ring $S=R_{[\F_q]}$ satisfies 
\begin{enumerate}
\item $\eme_S=S\cap \eme_R$
\item $S$ has the same residue field as $R$.
\item $\eme_S = pS$.
\end{enumerate}
\end{cor}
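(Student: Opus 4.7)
The plan is to verify each of the three properties directly from results already established in the paper, in roughly this order.

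For property (2), there is essentially nothing to prove: by the construction used in the theorem on the existence of the minimal subring $R_{[\F]}$ with prescribed residue field $\F$, the ring $S=R_{[\F_q]}$ is explicitly produced with residue field $\F_q$, which by definition is the residue field of $R$.

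For property (1), I would appeal to the Observation made just before the lemma on $T(L_1)=T(L_2)$: in any ring of class $\bigstar$, the maximal ideal coincides with the set of nilpotent elements, so for any subring $L\subseteq R$ one automatically has $\eme_L = L\cap\eme_R$. Applied to $L=S$, this gives $\eme_S = S\cap \eme_R$.

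For property (3), the cleanest route is through the previously proved structural result. By Theorem \ref{thm-12} (or equivalently its corollary on the structure of $R_{[\F_q]}$), $S$ is isomorphic to the Galois ring $R(N,n)$, and the characterization theorem states precisely that rings of characteristic $p^N$ whose maximal ideal is $pR$ are the Galois rings; in particular, $\eme_{R(N,n)}=pR(N,n)$ holds by definition of $R(N,n)$ as an unramified $\Z/p^N$-algebra. Transporting this isomorphism back to $S$ gives $\eme_S = pS$. Alternatively, one can extract this directly from the proof of Theorem \ref{thm-12}, where the argument showed that a ring generated as a $\Z/p^N$-module by its Teichmuller units has maximal ideal $pR$; since $S$ is by construction the $\Z/p^N$-submodule of $R$ generated by $T(S)=\tau(\F_q^\times)=T(R)$, the same argument applies verbatim to $S$.

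There is no real obstacle here: all three facts are immediate consequences of earlier results, so the only task is to cite them in the right order. The only minor subtlety worth flagging is that for property (3) one must know that the Teichmuller units of $S$ coincide with those of $R$, which is the content of the lemma characterizing subrings with the same residue field via their Teichmuller subgroups.
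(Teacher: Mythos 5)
Your proposal is correct and follows essentially the same route as the paper: property (1) via the observation that for any subring $L$ of a ring in class $\bigstar$ one has $\eme_L = L\cap\eme_R$, property (2) by construction, and property (3) via the identification of $R_{[\F_q]}$ with the Galois ring $R(N,n)$, whose maximal ideal is $pR(N,n)$ by the characterization theorem. Your added remark that $T(S)=T(R)$ (from the lemma on Teichmuller subgroups of subrings with equal residue field) is a worthwhile explicit note, but it does not change the argument.
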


\begin{proof} The first holds for any subring, the second by definition and the third property $\eme_S=pS$ is part of the characterization of $S$ being a ``Galois ring" \cite{MacDonald}.
\end{proof}


\section{General existence of subrings}

Let $(R,\eme)$ be a local ring satisfying $\bigstar$ from here on.

\begin{definition} The Teichmuller set of $R$ defined by $\bar{T}(R)=T(R)\cup \{0\}$. 
\end{definition}

\begin{lem} Every element of $R$ can be written uniquely as $t+m$, with $t\in \bar{T}(R)$ and $m\in \eme$. In other words, the addition map $\bar{T}(R)\times \eme \to R $ is a bijection of sets. Furthermore, an element is invertible iff the $t\in \bar{T}(R)$ component is non-zero.
\end{lem}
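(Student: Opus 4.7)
The plan is to reduce this to the bijectivity of the set map $\bar{T}(R) \to \F_q$ induced by reduction modulo $\eme$. By the splitting proposition, the map $R^\times \to \F_q^\times$ restricts to an isomorphism $T(R) \xrightarrow{\sim} \F_q^\times$; extending by sending $0 \mapsto 0$ gives a set bijection $\bar{T}(R) \xrightarrow{\sim} \F_q$. This is the one fact I need.

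For existence, given $r \in R$ let $\bar{r} \in \F_q$ be its residue, and let $t \in \bar{T}(R)$ be the unique preimage of $\bar{r}$ under the bijection above. Then $r - t \in \eme$ by construction, so setting $m = r - t$ gives $r = t + m$ with $t \in \bar{T}(R)$ and $m \in \eme$. For uniqueness, suppose $t_1 + m_1 = t_2 + m_2$ with $t_i \in \bar{T}(R)$, $m_i \in \eme$. Reducing mod $\eme$ gives $\overline{t_1} = \overline{t_2}$ in $\F_q$, and injectivity of $\bar{T}(R) \to \F_q$ forces $t_1 = t_2$, whence $m_1 = m_2$.

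For the invertibility assertion, I use the standard fact that in a local ring $(R,\eme)$ an element is a unit iff its image in the residue field is nonzero (elements outside every maximal ideal are units, and the maximal ideal is $\eme$). Given the decomposition $r = t + m$, we have $\bar{r} = \bar{t}$, which is nonzero in $\F_q$ precisely when $t \neq 0$ in $\bar{T}(R)$. Thus $r \in R^\times$ iff $t \neq 0$.

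There is no real obstacle here: the substantive content was already packaged into the splitting of $R^\times$ and the identification $T(R) \cong \F_q^\times$ from the earlier propositions. The only thing to be careful about is the role of $0$, which is handled cleanly by working with $\bar{T}(R) = T(R) \cup \{0\}$ and noting that the reduction map remains bijective onto all of $\F_q$ after this augmentation.
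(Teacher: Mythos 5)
Your proof is correct and is essentially the paper's argument, just written out in full: the paper's one-line proof ("by construction... the map $T(R)\to\F_q^\times$ is an isomorphism") is exactly the bijection $\bar{T}(R)\to\F_q$ you isolate and then use for existence, uniqueness, and the unit criterion. No gaps; your version merely makes explicit the steps the paper leaves to the reader.
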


\begin{proof} Existence: By construction of the Teichmuller units for $R$. Uniqueness: By construction as well (the map $T(R)\to (\F_q)^{\times}$ is an isomorphism). The rest is clear. 
\end{proof}

\begin{lem}\label{lem-16} Let $S$ be a subring with the same residue field as $R$. If $\eme_S= \eme$ then $S=R$.
\end{lem}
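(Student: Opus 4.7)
The plan is to invoke the Teichmuller decomposition from the preceding lemma and verify that both components of an arbitrary element of $R$ already sit inside $S$.

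First I would recall that by the earlier lemma, any $r \in R$ admits a unique decomposition $r = t + m$ with $t \in \bar{T}(R) = T(R) \cup \{0\}$ and $m \in \eme_R$. The task is then to show each summand lies in $S$. The hypothesis $\eme_S = \eme_R$ immediately gives $m \in S$. For the Teichmuller component, I would appeal to the lemma stating that two subrings have the same residue field if and only if their Teichmuller unit groups coincide: since $S$ and $R$ share a residue field, $T(S) = T(R)$, hence $\bar{T}(R) = \bar{T}(S) \subseteq S$, so $t \in S$ as well. Therefore $r = t + m \in S$, and since $r$ was arbitrary, $S = R$.

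There is no real obstacle here; the whole argument is just a one-line application of the uniqueness of the Teichmuller decomposition combined with the two hypotheses (same residue field, same maximal ideal), each of which controls exactly one of the two summands. The only thing to be slightly careful about is that the Teichmuller set $\bar{T}(R)$ contains $0$, so the case where $r \in \eme_R$ (i.e.\ $t = 0$) is not exceptional and is handled uniformly.
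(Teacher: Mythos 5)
Your proof is correct and is essentially the paper's own argument, just written out element by element: the paper's one-line proof combines $R=\bar{T}(R)+\eme$ with $\bar{T}(S)=\bar{T}(R)$ (from the same-residue-field hypothesis) and $\eme_S=\eme$ to conclude $R\subseteq S$, exactly as you do. No issues.
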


\begin{proof} Follows since $R=\bar{T}(R)+\eme$ and $\bar{T}(S)=\bar{T}(R)$.
\end{proof}

\begin{thm} Let $S\subseteq R$ a subring. If $S\neq R$ then $S+\eme^2\neq R$. In particular, if $S$ is a maximal subring with the same residue field as $R$, then $\eme^2\subseteq S$. 
\end{thm}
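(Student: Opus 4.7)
The plan is to reduce the first claim to Lemma \ref{lem-16}, and then derive the second claim from the first by considering $S+\eme^2$ as an intermediate subring.

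\textbf{Step 1 (Reduction to same residue field).} Suppose for contradiction that $S + \eme^2 = R$. Since $\eme^2 \subseteq \eme$, this already gives $S + \eme = R$, so the composition $S \hookrightarrow R \twoheadrightarrow R/\eme$ is surjective. Hence $S$ has the same residue field as $R$, and it suffices to prove $\eme_S = \eme$ and invoke Lemma \ref{lem-16}.

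\textbf{Step 2 (Key containment).} From $R = S + \eme^2$ intersected with $\eme$, I get $\eme = (S\cap\eme) + \eme^2 = \eme_S + \eme^2$. The heart of the proof is the inductive claim
\[
\eme \subseteq \eme_S + \eme^n \qquad \text{for all } n \geq 2.
\]
The base case $n=2$ is exactly what we just established. For the induction step, assuming $\eme \subseteq \eme_S + \eme^n$, I compute
\[
\eme^2 \subseteq (\eme_S + \eme^n)(\eme_S+\eme^n) = \eme_S^2 + \eme_S\eme^n + \eme^n\eme_S + \eme^{2n} \subseteq \eme_S + \eme^{n+1},
\]
using that $\eme_S^2\subseteq \eme_S$ (it's an ideal of $S$), that $\eme_S\subseteq\eme$ so the cross terms land in $\eme^{n+1}$, and that $2n \geq n+1$. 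Substituting back into $\eme = \eme_S + \eme^2$ gives $\eme \subseteq \eme_S + \eme^{n+1}$.

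\textbf{Step 3 (Nilpotence closes the argument).} Since $\eme$ is nilpotent (this is the $\bigstar$ hypothesis), $\eme^n = 0$ for $n$ large, so the claim yields $\eme \subseteq \eme_S$, hence $\eme = \eme_S$. Lemma \ref{lem-16} then forces $S = R$, the desired contradiction.

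\textbf{Step 4 (The ``in particular" clause).} Suppose $S$ is a maximal subring with the same residue field as $R$, and consider $T := S + \eme^2$. Because $\eme^2$ is an ideal of $R$, $T$ is a subring (closure under multiplication uses $S\cdot\eme^2 \subseteq \eme^2$), and since $\eme^2 \subseteq \eme$, $T$ shares a residue field with $S$ and hence with $R$. By the first part applied to $S\subseteq R$, if $S \neq R$ then $T = S + \eme^2 \neq R$, so $T$ is a proper subring of $R$ with the same residue field as $R$ containing $S$. Maximality of $S$ forces $T = S$, i.e.\ $\eme^2 \subseteq S$.

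The main obstacle is really just the inductive bootstrap in Step 2; once one sees that multiplying $\eme = \eme_S + \eme^2$ by itself increases the $\eme$-power on the remainder term, the rest is bookkeeping.
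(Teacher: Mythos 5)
Your proof is correct. It shares the paper's overall skeleton for the first assertion (argue by contradiction, reduce to $\eme_S=\eme$ plus equality of residue fields, then apply Lemma \ref{lem-16}), and Step 4 is essentially identical to the paper's handling of the ``in particular'' clause; but the route to the two key facts is genuinely different. The paper decomposes $R=\bar{T}(S)+\eme_S+\eme^2$ via the Teichmuller set of $S$, identifies the non-invertible elements to extract $\eme_S+\eme^2=\eme$, and only afterwards deduces $\bar{T}(S)=\bar{T}(R)$ to get the residue-field equality. You bypass Teichmuller units entirely: the residue-field equality falls out immediately from surjectivity of $S\to R/\eme$ (together with the paper's Observation that $\eme_S=S\cap\eme$), and $\eme=\eme_S+\eme^2$ comes from simply intersecting $R=S+\eme^2$ with $\eme$. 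Your inductive bootstrap is a reproof of Lemma \ref{lem-18} with a cleaner invariant, $\eme\subseteq\eme_S+\eme^n$, in place of the paper's $\eme^n=\eme_S^n+\eme^{n+1}$, which the paper then has to unwind backwards from the top nilpotent power; yours terminates in one step once $\eme^n=0$. The trade-off is minor: the paper's version records the extra conclusion $\eme^l=\eme_S^l$ for all $l$, while yours is shorter, more elementary, and makes the Nakayama-style mechanism more transparent.
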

 
\begin{proof} Notice that the second assertion follows from the first. Indeed, if $S$ is a maximal subring with the same residue field as $R$, then $S+\eme^2$ also has the same residue field as $R$ and so $S\subseteq S+\eme^2\subseteq R$ and $S+\eme^2\neq R$ implies that $S=S+\eme^2$, so $\eme^2\subseteq S$.
\vspace{1 mm}

To prove the first assertion, by contradiction, suppose that $S\neq R$ is such that $S+\eme^2 = R$. We'll derive a contradiction from this fact. 
\vspace{1 mm}

Using the Teichmuller set for $S$, we have the equality of sets $\bar{T}(S)+\eme_{S}+\eme^2 = R$. And hence the set of non-invertible elements of $R$ is $\eme_S+\eme^2=\eme$. From Lemma \ref{lem-18} below, we have that $\eme_S=\eme$. Hence $\bar{T}(S)+\eme=R$, hence $\bar{T}(S)=\bar{T}(R)$ so $R$ and $S$ have the same residue field and from Lemma \ref{lem-16} above, we obtain $S=R$, contradiction.
\end{proof}

\begin{lem}\label{lem-18} Let $S$ be a subring of $R$ such that $\eme_S+\eme^2=\eme$. Then $\eme=\eme_S$.
\end{lem}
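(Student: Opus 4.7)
The plan is a Nakayama-style iteration, using nilpotence of $\eme$ in place of finite generation. I would prove by induction on $k\geq 2$ that $\eme \subseteq \eme_S + \eme^k$, and then, choosing $k$ large enough that $\eme^k=0$ (possible since $\eme$ is nilpotent under $\bigstar$), conclude $\eme \subseteq \eme_S$; combined with the trivial inclusion $\eme_S \subseteq \eme$, this gives $\eme=\eme_S$.

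The base case $k=2$ is the hypothesis. For the inductive step, the key observation is that although $\eme_S$ is not an ideal of $R$, it is an ideal of $S$, so $\eme_S \cdot \eme_S \subseteq \eme_S$; and $\eme_S \cdot \eme^k \subseteq \eme^{k+1}$ since $\eme_S \subseteq \eme$. Assuming $\eme \subseteq \eme_S + \eme^k$, I would compute
\[ \eme \cdot \eme_S \subseteq (\eme_S + \eme^k)\cdot \eme_S = \eme_S^2 + \eme^k \cdot \eme_S \subseteq \eme_S + \eme^{k+1}, \]
and therefore
\[ \eme^2 \subseteq \eme\cdot(\eme_S+\eme^k) \subseteq \eme\cdot \eme_S + \eme^{k+1} \subseteq \eme_S + \eme^{k+1}. \]
Combined with the hypothesis $\eme = \eme_S + \eme^2$, this yields $\eme \subseteq \eme_S + \eme^{k+1}$, completing the induction.

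The main subtlety, and what the argument must navigate, is precisely that $\eme_S$ is not a priori an ideal of the larger ring $R$: if one naively substitutes $\eme = \eme_S + \eme^2$ into itself, a cross term $\eme_S \cdot \eme$ lies in $\eme$ but has no reason to lie in $\eme_S$. The trick above is to \emph{reapply} the inductive hypothesis inside such a cross term, absorbing one factor via $\eme_S^2 \subseteq \eme_S$ and pushing the remaining factor of $\eme$ up into the next power $\eme^{k+1}$, so that nilpotence eventually forces the residual term to vanish.
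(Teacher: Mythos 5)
Your proof is correct. It is the same Nakayama-style idea as the paper's --- iterate the hypothesis to push the error term into ever higher powers of $\eme$, then invoke nilpotence --- but the inductive invariant is genuinely different. The paper proves $\eme^{n}=\eme_{S}^{n}+\eme^{n+1}$ for all $n\geq 1$ by multiplying the hypothesis by $\eme^n$, and then must make a second, backward pass: starting from the minimal $n$ with $\eme^{n+1}=0$ it deduces $\eme^{n}=\eme_{S}^{n}$ and descends to $\eme=\eme_S$. Your invariant $\eme\subseteq \eme_S+\eme^{k}$ keeps $\eme_S$ itself (rather than its powers) as the main term, absorbing the cross term via $\eme_S^2\subseteq\eme_S$, so a single forward pass suffices and the conclusion drops out immediately once $\eme^k=0$. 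You also correctly isolate the one real subtlety --- that $\eme_S$ is only an ideal of $S$, not of $R$ --- which the paper's computation handles implicitly by bounding $\eme_S^{n}\eme^{2}$ and $\eme^{n+1}\eme_S$ inside $\eme^{n+2}$. Your version is marginally cleaner; the paper's version yields the slightly stronger byproduct $\eme^{l}=\eme_S^{l}$ for all $l\geq 1$, though that extra information is not used elsewhere.
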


\begin{proof} By induction we'll show that $\eme^{n}=\eme_{S}^n+\eme^{n+1} $ for all $n\geq 1$. The case $n=1$ is given. Assume the case $n$, and so $\eme^{n+1}=\eme^n\eme=(\eme_S^n+\eme^{n+1})(\eme_S+\eme^2)=\eme_S^{n+1}+\eme_{S}^n\eme^2+\eme^{n+1}\eme_S+\eme^{n+3}\subseteq \eme_S^{n+1}+\eme^{n+2}\subseteq \eme^{n+1}$ so equality holds throughout. 
\vspace{1 mm}

Now, since $\eme$ is a nilpotent ideal, there's minimal $n$ such that $\eme^{n+1}=0$, from here working backwards one gets that $\eme^l=\eme_S^l$ for all $l\geq 1$.
\end{proof} 


\begin{lem} \label{lem-19} Let $I$ be a ideal contained in $\eme$. Then $\eme=I+\eme^2$ implies $I=\eme$.
\end{lem}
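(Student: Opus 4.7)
The plan is to show that the hypothesis $\mathfrak{m} = I + \mathfrak{m}^2$ can be iterated to push the ``error term'' $\mathfrak{m}^2$ into arbitrarily high powers of $\mathfrak{m}$, and then use nilpotence of $\mathfrak{m}$ (which is available because $R$ satisfies $\bigstar$) to conclude. This is essentially Nakayama's lemma in the special case where the module is the ideal $\mathfrak{m}/I$ and the maximal ideal is nilpotent, and is morally the same kind of bootstrapping argument already used in Lemma~\ref{lem-18}.

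The key step is the following inductive claim: for every $n \geq 2$, one has $\mathfrak{m} = I + \mathfrak{m}^n$. The base case $n=2$ is just the hypothesis. For the inductive step, assume $\mathfrak{m} = I + \mathfrak{m}^n$. Multiplying this equality by $\mathfrak{m}$ yields $\mathfrak{m}^2 = I\mathfrak{m} + \mathfrak{m}^{n+1} \subseteq I + \mathfrak{m}^{n+1}$, using that $I$ is an ideal of $R$. Plugging this back into $\mathfrak{m} = I + \mathfrak{m}^2$ gives $\mathfrak{m} \subseteq I + I + \mathfrak{m}^{n+1} = I + \mathfrak{m}^{n+1}$, and the reverse inclusion is obvious since $I \subseteq \mathfrak{m}$ and $\mathfrak{m}^{n+1}\subseteq \mathfrak{m}$.

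Finally, since $R$ satisfies $\bigstar$, the ideal $\mathfrak{m}$ is nilpotent, so there exists $n$ with $\mathfrak{m}^n = 0$. Applying the claim for this $n$ gives $\mathfrak{m} = I + 0 = I$, as desired.

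There is no real obstacle: the only subtle point is remembering to use that $I$ is an ideal (not merely an additive subgroup) so that $I\mathfrak{m} \subseteq I$, and to invoke nilpotence of $\mathfrak{m}$ from the standing hypothesis $\bigstar$ to terminate the induction.
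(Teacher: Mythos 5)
Your proof is correct and is essentially the argument the paper has in mind: the paper's proof just says ``immediate from Nakayama lemma, or the proof method above,'' and your bootstrapping $\eme = I + \eme^{n}$ for all $n$, terminated by nilpotence of $\eme$, is exactly the nilpotent-ideal version of Nakayama that the reference to Lemma~\ref{lem-18} points to. You have simply written out the details the paper omits.
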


\begin{proof} Immediate from Nakayama lemma, or the proof method above. 
\end{proof}

\begin{definition} The characteristic module for $R$ is the module $V :=V_R=\eme/(\eme^2+pR)$.
\end{definition}

\begin{lem} There exists a maximal subring $S$ with the same residue field as $R$ iff $R_{[\F_q]}\neq R$.
\end{lem}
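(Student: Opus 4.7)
The forward direction is immediate: if $S$ is a maximal subring with residue field $\F_q$, then $S \subsetneq R$, and $R_{[\F_q]} \subseteq S$ by minimality of $R_{[\F_q]}$, so $R_{[\F_q]} \neq R$.

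For the converse, I would write $R_0 := R_{[\F_q]}$ and assume $R_0 \neq R$. The theorem preceding Lemma \ref{lem-18} forces any maximal subring with residue field $\F_q$ to contain $\eme^2$, so the natural enlargement to work with is $R_0 + \eme^2$. First I would check that this is still a proper subring of $R$: if equality held, then intersecting with $\eme$ (and using modularity) yields $\eme = \eme_{R_0} + \eme^2$, whence $\eme_{R_0} = \eme$ by Lemma \ref{lem-18}, and then $R_0 = R$ by Lemma \ref{lem-16}, a contradiction.

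The next step is to reduce modulo $\eme^2$. In $\bar{R} := R/\eme^2$ we have $\bar{\eme}^2 = 0$, so writing $\bar{R}_0 := (R_0+\eme^2)/\eme^2$, the maximal ideal $\bar{\eme}$ is annihilated by $\eme_{\bar{R}_0} \subseteq \bar{\eme}$ and hence carries the structure of an $\F_q$-vector space. A direct check using $R = R_0 + \eme$ shows that every subring of $\bar{R}$ containing $\bar{R}_0$ has the unique form $\bar{R}_0 + W$ with $W = \bar{S} \cap \bar{\eme}$ an $\F_q$-subspace of $\bar{\eme}$ containing $\bar{R}_0 \cap \bar{\eme}$; closure under multiplication reduces to noting that the cross terms $\bar{R}_0 \cdot W$ lie in $W$ and the quadratic term lies in $\bar{\eme}^2 = 0$. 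Under this correspondence, proper subrings match proper subspaces.

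Finally, the quotient $\bar{\eme}/(\bar{R}_0 \cap \bar{\eme}) \cong \eme/(\eme_{R_0} + \eme^2)$ is nonzero by the previous paragraph, so I can fix any nonzero $\F_q$-linear functional on it; its kernel pulls back to a codimension-one $\F_q$-subspace $W$ of $\bar{\eme}$ containing $\bar{R}_0 \cap \bar{\eme}$, which is therefore maximal proper among such subspaces. The preimage of $\bar{R}_0 + W$ in $R$ is then a maximal proper subring of $R$ containing $R_0$, so in particular has residue field $\F_q$, as required. The main delicate point is the correspondence between subrings of $\bar{R}$ containing $\bar{R}_0$ and $\F_q$-subspaces of $\bar{\eme}$ containing $\bar{R}_0 \cap \bar{\eme}$; once that is in place, everything reduces to elementary linear algebra, with Zorn's lemma producing the functional in the possibly infinite-dimensional case.
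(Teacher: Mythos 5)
Your proof is correct, but it takes a genuinely different route from the paper's. The paper's entire proof is the one-line observation that any subring $S$ with residue field $\F_q$ satisfies $R_{[\F_q]}\subseteq S\subseteq R$: this gives the forward direction immediately, and for the converse it implicitly relies on the fact that the (now nonempty) poset of proper subrings with residue field $\F_q$ has a maximal element --- automatic when $R$ is finite, but requiring an argument (a chain condition or Zorn) in the general class $\bigstar$. You instead prove the converse \emph{constructively}: you show $R_{[\F_q]}+\eme^2\neq R$ via modularity together with Lemmas \ref{lem-16} and \ref{lem-18}, pass to $R/\eme^2$, set up the inclusion-preserving bijection between subrings containing the image of $R_{[\F_q]}$ and $\F_q$-subspaces of $\eme/\eme^2$ containing $(\eme_{R_{[\F_q]}}+\eme^2)/\eme^2$, and then take the preimage of a codimension-one subspace. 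This is essentially the content of Proposition \ref{prop-24}, Corollary \ref{cor-26} and Theorem \ref{main-thm}, which the paper develops only \emph{after} this lemma (note that your quotient $\eme/(\eme_{R_{[\F_q]}}+\eme^2)$ coincides with the characteristic module $V=\eme/(\eme^2+pR)$, since $\eme_{R_{[\F_q]}}=pR_{[\F_q]}$ and $p\eme\subseteq\eme^2$). What your approach buys is a self-contained proof that is valid verbatim in the possibly infinite case of class $\bigstar$, at the cost of duplicating machinery that the paper builds later; what the paper's approach buys is brevity, at the cost of leaving the existence of a maximal element unjustified outside the finite setting. All the individual steps you use (modularity, the $\F_q$-linearity of $\bar{S}\cap\bar{\eme}$ via Teichmuller lifts, properness matching properness) check out.
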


\begin{proof} Any such $S$ satisfies $R_{[\F_q]}\subseteq S\subseteq R$.
\end{proof}

\begin{lem}\label{lem-21} Let $S$ be a maximal subring with the same residue field as $R$. Then $S$ contains the ideal $\eme^2+pR$.
\end{lem}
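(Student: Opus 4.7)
My plan is to invoke the preceding theorem to dispose of the $\eme^2$ part for free, so the only real content is showing $pR\subseteq S$. For that I would lean on the Teichmuller decomposition of the ambient ring rather than on a Nakayama/maximality argument.

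More precisely, the previous theorem already yields $\eme^2\subseteq S$, so I only need $pR\subseteq S$. Given $r\in R$, I would use the earlier lemma writing $r=t+m$ uniquely with $t\in \bar T(R)$ and $m\in\eme$. Since $S$ has the same residue field as $R$, the lemma characterizing subrings by their residue fields gives $T(S)=T(R)$, and so $\bar T(S)=\bar T(R)\subseteq S$. In particular $t\in S$.

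Now I would multiply by $p$ to get $pr=pt+pm$. The first term lies in $S$ because both $p=p\cdot 1$ and $t$ are elements of the subring $S$. For the second term, the key observation is that $p\in\eme$: the residue field $\F_q$ has characteristic $p$, so $p$ maps to $0$ under $R\to R/\eme$. Hence $pm\in\eme\cdot\eme=\eme^2\subseteq S$. Therefore $pr\in S$ for every $r\in R$, i.e.\ $pR\subseteq S$, and combining with $\eme^2\subseteq S$ gives $\eme^2+pR\subseteq S$.

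There is no real obstacle here once the right observation is made; the step that does the work is simply recognizing that $p\in\eme$, so that $p\cdot\eme$ is automatically absorbed by $\eme^2$. This is why appealing to the Teichmuller decomposition (which reduces arbitrary elements of $R$ to something in $S$ plus something in $\eme$) is cleaner than attempting to show $S+pR\neq R$ by a maximality/Nakayama argument via Lemma \ref{lem-18}, though that alternative route would also work: $S+pR$ is a subring with the same residue field as $S$ (since $pR\subseteq\eme$), so by maximality either $S+pR=S$ or $S+pR=R$, and the latter would force $\eme=\eme_S+pR\subseteq\eme_S+\eme^2$, whence $\eme_S=\eme$ and then $S=R$ by Lemma \ref{lem-16}, a contradiction.
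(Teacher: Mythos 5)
Your proof is correct and follows essentially the same route as the paper: both take $\eme^2\subseteq S$ from the preceding theorem, use $\bar{T}(S)=\bar{T}(R)$ to write $R=\bar{T}(S)+\eme$, and then observe that $p\bar{T}(S)\subseteq S$ while $p\eme\subseteq\eme^2\subseteq S$ because $p\in\eme$. The maximality/Nakayama alternative you sketch at the end is a valid second route, but your main argument is the paper's argument.
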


\begin{proof} We know that $\eme^2\subseteq S$. To show that $pR\subseteq S$. But $R=\bar{T}(R)+\eme=\bar{T}(S)+\eme$ and $p\bar{T}(S)\subseteq S$ and $p\eme\subseteq \eme^2\subseteq S$.
\end{proof}

\begin{lem} Let $S$ be a maximal subring with the same residue field as $R$. Then $\eme_S$ is an ideal of $R$. Moreover $\eme^2+pR\subseteq \eme_S$. 
\end{lem}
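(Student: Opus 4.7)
The plan is to reduce everything to the containment $\eme^2 + pR \subseteq S$ already provided by Lemma \ref{lem-21}.

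First I would dispatch the ``moreover'' clause. Both $\eme^2$ and $pR$ sit inside $\eme$: the former trivially, and the latter because $R$ has characteristic $p^N$, making $p$ nilpotent and hence an element of $\eme$. Combined with Lemma \ref{lem-21}, which gives $\eme^2 + pR \subseteq S$, we conclude $\eme^2 + pR \subseteq S \cap \eme = \eme_S$.

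For the main assertion that $\eme_S$ is an ideal of $R$, I would pick arbitrary $r \in R$ and $a \in \eme_S$ and argue that $ra \in \eme_S$. Because $S$ and $R$ share the same residue field, the composition $S \hookrightarrow R \twoheadrightarrow R/\eme$ is surjective, so $R = S + \eme$ and we may write $r = s + m$ with $s \in S$ and $m \in \eme$. Then $ra = sa + ma$, where $sa \in \eme_S$ since $\eme_S$ is an ideal of the ring $S$, and $ma \in \eme \cdot \eme_S \subseteq \eme^2$, which by the previous paragraph lies in $\eme_S$. Both summands are in $\eme_S$, and we are done.

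There is no real obstacle: the substantive content was extracted beforehand, in the theorem showing $\eme^2 \subseteq S$ for any maximal such $S$ and in Lemma \ref{lem-21} handling $pR$. Once those containments are available, ideality of $\eme_S$ in $R$ follows from the additive decomposition $R = S + \eme$ by a one-line computation.
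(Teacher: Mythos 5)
Your proof is correct and follows essentially the same route as the paper: the paper decomposes $r$ as $t+m$ with $t\in\bar{T}(S)\subseteq S$ and $m\in\eme$, while you use the equivalent decomposition $R=S+\eme$, and both arguments then reduce to $S\eme_S\subseteq\eme_S$ and $\eme\eme_S\subseteq\eme^2\subseteq\eme_S$ via Lemma \ref{lem-21}. Your handling of the ``moreover'' clause, passing through $S\cap\eme=\eme_S$, just makes explicit what the paper leaves implicit.
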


\begin{proof}  $R=\bar{T}(R)+\eme=\bar{T}(S)+\eme$ and $\bar{T}(S)\eme_S\subseteq \eme_S$ and $\eme\eme_S\subseteq \eme^2\subseteq \eme_S$. The last inclusion follows from Lemma \ref{lem-21} above.
\end{proof}

\begin{prop} The characteristic module is a vector space over $\F_q$. Moreover, it's vector space structure is compatible with $T(R)$. Specifically, Let $t\in T(R), x\in \eme$, and the images $[t]\in R/\eme$,  $[x]\in V$, then the actions are related as follows: $[t].[x]=[tx]$. 
\end{prop}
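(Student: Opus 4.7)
The plan is to realize $V$ as a quotient of the standard cotangent-type module $\eme/\eme^2$, which already carries a natural $R/\eme$-module structure, and then to import the $\F_q$-structure and the compatibility with $T(R)$ via the Teichmuller section identification $\bar{T}(R)\leftrightarrow \F_q$.

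First I would check that the definition makes sense: since $R/\eme=\F_q$ has characteristic $p$, $p\in \eme$, so $pR\subseteq \eme$ and in particular $p\eme\subseteq \eme^2$. Second, I would record the well-known fact that $\eme/\eme^2$ is an $R/\eme$-module under $[r]\cdot [x]:=[rx]$, well-defined because replacing $r$ by $r+m$ with $m\in \eme$ changes $rx$ by $mx\in \eme^2$. Third, using that the Teichmuller set $\bar{T}(R)$ maps bijectively onto $R/\eme=\F_q$, I transport this action to an $\F_q$-action on $\eme/\eme^2$; under this identification the scalar multiplication is literally $[t]\cdot[x]=[tx]$ for $t\in T(R)$, which is precisely the compatibility formula in the statement. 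Fourth, I would verify that $(pR+\eme^2)/\eme^2$ is an $\F_q$-submodule of $\eme/\eme^2$: for $t\in \bar{T}(R)\subseteq R$ and $pr\in pR$, one has $t\cdot(pr)=p(tr)\in pR$, and $\eme^2$ is obviously stable. Hence $V=\bigl(\eme/\eme^2\bigr)\big/\bigl((pR+\eme^2)/\eme^2\bigr)$ inherits an $\F_q$-vector space structure, and the explicit formula $[t].[x]=[tx]$ descends from $\eme/\eme^2$ to $V$ by construction.

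The only point needing care — and the mildest possible obstacle — is the well-definedness of the $\F_q$-action on $V$, i.e. independence from the choice of Teichmuller lift. This reduces immediately to the well-definedness of the $R/\eme$-action on $\eme/\eme^2$ recalled in step two, together with the stability of $pR+\eme^2$ verified in step four. No deeper input from the earlier structure theory is needed; the whole argument is essentially the observation that passing from $\eme/\eme^2$ to $V$ only kills an $\F_q$-submodule.
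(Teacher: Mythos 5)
Your proof is correct and follows essentially the same route as the paper: both arguments come down to the fact that $\eme$ annihilates $V$, so the $R$-module structure on the quotient factors through $R/\eme=\F_q$, and the formula $[t].[x]=[tx]$ is then immediate. The paper argues directly on $V=\eme/(\eme^2+pR)$ without the intermediate passage through $\eme/\eme^2$ or the Teichmuller identification, which makes your detour through $\bar{T}(R)$ unnecessary but harmless.
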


\begin{proof} $V=\eme/(\eme^2+pR)$ is an $R$-module that's annihilated by $\eme$, hence a $R/\eme=\F_q$-vector space. For compatibility, notice that the map $\phi: \eme \to \eme/(\eme^2+pR)$ is an $R$-module map so $[tx]=\phi(tx)=t.\phi(x)=t.[x]$ but the action of $R$ factors through $R/\eme$ so $t.[x]=[t].[x]$ hence the result. 
\end{proof}

\begin{prop}\label{prop-24} The ideals of $R$ containing $\eme^2+pR$ correspond to in one-to-one manner with $\F_q$-subspaces of $V$, in a inclusion preserving way.
\end{prop}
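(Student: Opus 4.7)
Let $J := \eme^2 + pR$ and $\bar R := R/J$. The plan is to combine two standard correspondences. First I invoke the lattice isomorphism theorem for rings, which gives an inclusion-preserving bijection between ideals of $R$ containing $J$ and ideals of $\bar R$. Since $R$ is local, every proper ideal of $R$ is contained in $\eme$, so the meaningful part of this bijection restricts to a correspondence between ideals $I$ with $J \subseteq I \subseteq \eme$ and ideals of $\bar R$ contained in $V = \eme/J$.

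Second, I identify the ideals of $\bar R$ sitting inside $V$ with $\F_q$-subspaces of $V$. The key point is that $\eme \cdot \eme \subseteq \eme^2 \subseteq J$, so $V$ is annihilated by $\bar\eme = \eme/J$. Hence the $\bar R$-action on $V$ descends to an action of $\bar R/\bar\eme = R/\eme = \F_q$, which is precisely the $\F_q$-vector space structure on $V$ established in the previous proposition. It follows that an additive subgroup $W \subseteq V$ is stable under $\bar R$ if and only if it is stable under $\F_q$, i.e. $\bar R$-submodules of $V$ coincide with $\F_q$-subspaces of $V$.

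Composing the two bijections yields the desired correspondence, with inclusion-preservation inherited from both steps. There is essentially no obstacle here: the argument is a routine application of the correspondence theorem combined with the standard fact that a module annihilated by $\eme$ is a module over $R/\eme$. The only subtlety worth flagging is that one must check an $\F_q$-subspace of $V$ is automatically an ideal of $\bar R$ (not merely an additive subgroup), but this is immediate from the factorization of the $\bar R$-action through $\F_q$.
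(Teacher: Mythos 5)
Your proof is correct, and it reaches the conclusion by a slightly different route than the paper. The paper's proof takes the correspondence-theorem half for granted and concentrates on verifying by hand that the additive subgroup $I$ lifting a subspace $W$ is actually an ideal: it writes $R=\bar{T}(R)+\eme$ and checks the two pieces separately, $\eme I\subseteq \eme^2\subseteq I$ and $T(R)I\subseteq I$, the latter using the compatibility of the $T(R)$-action with the $\F_q$-structure on $V$ from the preceding proposition. You instead pass entirely to $\bar R=R/(\eme^2+pR)$ and observe that $V$ is killed by $\bar\eme$, so the $\bar R$-action factors through $\F_q$ and $\bar R$-submodules of $V$ are exactly $\F_q$-subspaces; this absorbs both of the paper's verifications into one standard module-theoretic fact and avoids the Teichmuller decomposition altogether. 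The two arguments rest on the same underlying observation (the $R$-action on $V$ factors through $R/\eme$), but yours is the more abstract and arguably cleaner packaging, while the paper's is more explicit and consistent with its running use of $\bar{T}(R)$. One small point you handle correctly and the paper leaves implicit: the statement as written should be read as concerning proper ideals (equivalently, ideals contained in $\eme$), since $R$ itself contains $\eme^2+pR$ but corresponds to no subspace of $V$; your remark that every proper ideal of a local ring lies in $\eme$ addresses this.
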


\begin{proof} The only thing to show that is given a subspace $W$ insider $V$, the corresponding $I\subseteq m$ is an ideal, where $W=I/(\eme^2+pR)$. But $R=\bar{T}(R)+\eme$, and since $\eme^2\subseteq I$, $I\eme\subseteq \eme^2\subset I$. To show that $T(R)I\subseteq I$, notice that since $W$ is $\F_q$-stable, $\F_qW\subseteq W$ implies $T(R)I\subseteq I+\eme^2+pR\subseteq I$.
\end{proof}

\begin{prop}\label{prop-25} The set $\bar{T}(R)+pR$ is a subring with the same residue field as $R$.
\end{prop}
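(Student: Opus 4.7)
The plan is to realize $\bar{T}(R) + pR$ as the preimage under the natural projection $\pi\colon R \to R/pR$ of an $\F_q$-subring sitting inside $R/pR$. Multiplicative closure of $\bar{T}(R) + pR$ is routine: for $s_i = t_i + p r_i$ with $t_i \in \bar{T}(R)$, the product expands to $t_1 t_2 + p(t_1 r_2 + t_2 r_1 + p r_1 r_2)$, and $t_1 t_2 \in \bar{T}(R)$ because $T(R)$ is a multiplicative group (and $0$ absorbs products under multiplication). The substantive content lies in additive closure.

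First I would observe that $R/pR$ itself belongs to the class $\bigstar$: it is local, of characteristic $p$, with residue field $\F_q$ and nilpotent maximal ideal $\eme/pR$. Then by Proposition~\ref{prop-11}, iterating Frobenius on $R/pR$ yields an embedding $\F_q \hookrightarrow R/pR$; let $F$ denote its image. Thus $F$ is a subring of $R/pR$ isomorphic to $\F_q$, and $F^{\times}$ is a subgroup of $(R/pR)^{\times}$ isomorphic to $\F_q^{\times}$.

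Next I would identify $\pi(\bar{T}(R)) = F$. The composition $T(R) \xrightarrow{\pi} (R/pR)^{\times} \to \F_q^{\times}$ is, by the defining property of Teichmuller units, the residue-map isomorphism, so $\pi|_{T(R)}$ is injective with image a subgroup of $(R/pR)^{\times}$ of order $q-1$ isomorphic to $\F_q^{\times}$. Applying the uniqueness of Teichmuller units (the proposition giving the split exact sequence for $R^\times$) now to $R/pR$, this image must coincide with $F^{\times}$. Hence $\pi(\bar{T}(R)) = F^{\times} \cup \{0\} = F$ is a subring of $R/pR$, and $\bar{T}(R) + pR = \pi^{-1}(F)$ is therefore a subring of $R$.

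For the residue field claim, the restriction of $R \twoheadrightarrow R/\eme = \F_q$ to $\bar{T}(R)$ is already surjective by construction, so the induced residue map on $\bar{T}(R) + pR$ hits all of $\F_q$, giving residue field $\F_q$. The main obstacle is the additive closure step: attempting to verify $t_1 + t_2 \equiv t_3 \pmod{pR}$ directly for Teichmuller elements is awkward, which is precisely why one passes to $R/pR$ and uses Proposition~\ref{prop-11} together with the uniqueness of Teichmuller units to recognize $\pi(\bar{T}(R))$ as the $\F_q$-subring $F$.
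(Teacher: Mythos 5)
Your proof is correct, but it takes a genuinely different route from the paper's. The paper reduces everything to the lower ring $R_{[\F_q]}$: since $R_{[\F_q]}$ is the $\Z/p^N$-span of $T(R)$, the structure theory of subsection 3.1 identifies it as a Galois ring with maximal ideal $pR_{[\F_q]}$, whence $R_{[\F_q]}=\bar{T}(R)+pR_{[\F_q]}$ and $\bar{T}(R)+pR=R_{[\F_q]}+pR$ is a subring plus an ideal. You instead work modulo $p$: you apply Proposition \ref{prop-11} to $R/pR$ (which indeed lies in class $\bigstar$ with characteristic $p$) to produce an embedded copy $F\cong\F_q$, and then use the uniqueness of the subgroup of $(R/pR)^\times$ isomorphic to $\F_q^\times$ to conclude that $\pi(T(R))=F^\times$, so that $\bar{T}(R)+pR=\pi^{-1}(F)$ is a subring as the preimage of one. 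The identification $\pi(\bar T(R))=F$ is the only point where real work happens, and your appeal to the uniqueness clause of the Teichmuller splitting proposition handles it cleanly; the opening remarks about multiplicative and additive closure are then redundant. What your route buys is independence from the Galois-ring characterization cited from the literature (and from the finiteness hypothesis nominally in force in subsection 3.1, which the paper only implicitly discharges via the finiteness of $R_{[\F_q]}$); in effect you are inlining the mod-$p$ reduction that the paper itself uses to prove Theorem \ref{thm-12}. What the paper's route buys is brevity, given that the structure of $R_{[\F_q]}$ has already been established.
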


\begin{proof} It's enough to show that $\bar{T}(R)+pR_{[\F_q]}$ is a subring for then $\bar{T}(R)+pR=\bar{T}(R)+pR_{[\F_q]}+pR$ is the sum of a subring and an ideal, hence a subring. It's also clear that these rings have the same residue field as $R$. Now, $R_{[\F_q]}$ is the $\Z/p^N$-span of $T(R)$ and from subsection 3.1 above on the ring $R_{[\F_q]}$, we know its maximal ideal is generated by $p$, so $R_{[\F_q]} = \bar{T}(R)+pR_{[\F_q]}$ is indeed a ring.

\end{proof}

\begin{cor}\label{cor-26} For any ideal $I$ containing $\eme^2+pR$, the set $\bar{T}(R)+I$ is a subring with the same residue field as $R$.
\end{cor}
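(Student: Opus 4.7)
The plan is to reduce to Proposition \ref{prop-25} by absorbing $pR$ into $I$. Since $I \supseteq \eme^2 + pR$, in particular $pR \subseteq I$, so set-theoretically
\[ \bar{T}(R) + I \;=\; \bar{T}(R) + pR + I \;=\; \bigl(\bar{T}(R)+pR\bigr) + I. \]
By Proposition \ref{prop-25}, the first summand is a subring $S_0$ of $R$, while $I$ is an ideal of $R$. So the problem reduces to the general fact that if $S_0 \subseteq R$ is a subring and $J \subseteq R$ is an ideal of the \emph{ambient} ring $R$, then $S_0 + J$ is a subring of $R$. This is a direct verification: for $s_i + j_i \in S_0 + J$ one has $(s_1+j_1)(s_2+j_2) = s_1s_2 + (s_1 j_2 + j_1 s_2 + j_1 j_2)$, where $s_1s_2 \in S_0$ and the parenthesized term lies in $J$ since $J$ is an $R$-ideal and $s_1,s_2 \in R$; closure under addition and presence of $0,1$ are immediate.

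For the residue field claim, apply it to our case $S_0 + I$. If $I = R$ there is nothing to do, so assume $I$ is proper; then $I \subseteq \eme$ since $\eme$ is the unique maximal ideal. By the Observation preceding Lemma 9, the maximal ideal of the subring $S' := \bar{T}(R)+I$ is $\eme_{S'} = S' \cap \eme$, so the natural map $S'/\eme_{S'} \hookrightarrow R/\eme = \F_q$ is injective. But $\bar{T}(R) \subseteq S'$ already surjects onto $\F_q$ (since $T(R) \to \F_q^\times$ is an isomorphism and $0 \mapsto 0$), so this injection is an equality.

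No real obstacle here; everything is mechanical once one notices the identity $\bar{T}(R)+I = (\bar{T}(R)+pR)+I$. The only subtlety worth flagging is making sure we invoke Proposition \ref{prop-25} for $\bar{T}(R)+pR$ rather than trying to re-prove directly that $\bar{T}(R)+I$ is closed under addition (which would require arguing that $t_1+t_2 \in \bar{T}(R) + pR$, precisely the content of Proposition \ref{prop-25} via the Galois ring structure of $R_{[\F_q]}$).
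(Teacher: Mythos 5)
Your proof is correct and follows the paper's argument exactly: both decompose $\bar{T}(R)+I$ as $(\bar{T}(R)+pR)+I$, invoke Proposition \ref{prop-25} for the first summand, and conclude via the general fact that a subring plus an ambient ideal is a subring. You simply spell out the closure verification and the residue-field identification in more detail than the paper does.
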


\begin{proof} By Proposition \ref{prop-25}, the set $\bar{T}(R)+pR$ is a subring and so $\bar{T}(R)+I=\bar{T}(R)+pR+I$ is the sum of subring and an ideal hence a subring.
\end{proof}

\begin{prop} There exists a maximal subring $S$ with the same residue field as $R$ iff $V\neq 0$.
\end{prop}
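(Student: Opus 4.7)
The plan is to reduce both implications to the numerical condition $\eme \supsetneq \eme^2 + pR$ (which is exactly $V \neq 0$), and then exploit a ``hyperplane'' of the characteristic module to produce the desired subring.

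For the forward direction I would argue the contrapositive. Assume $V = 0$, i.e.\ $\eme = \eme^2 + pR$. If $S$ were a maximal proper subring of $R$ with the same residue field as $R$, then Lemma~\ref{lem-21} places $\eme^2 + pR \subseteq S$, hence $\eme \subseteq S$; combined with $\bar{T}(S) = \bar{T}(R)$ and the Teichmuller decomposition $R = \bar{T}(R) + \eme$, we would get $R \subseteq S$, contradicting properness.

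For the reverse direction I would proceed constructively. Assuming $V \neq 0$, a standard Zorn's lemma argument, applied to the collection of $\F_q$-subspaces of $V$ avoiding a fixed nonzero vector, produces a hyperplane $W \subsetneq V$. Translating back via Proposition~\ref{prop-24} gives an $R$-ideal $I$ with $\eme^2 + pR \subseteq I \subsetneq \eme$ that is maximal among proper $R$-ideals containing $\eme^2 + pR$. Corollary~\ref{cor-26} then packages this as a subring $S := \bar{T}(R) + I$ with residue field $\F_q$; by uniqueness of the Teichmuller decomposition its maximal ideal is $\eme_S = I \subsetneq \eme$, so $S \subsetneq R$.

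The main obstacle is verifying that $S$ is maximal as a subring, not merely that $I$ is maximal as an ideal above $\eme^2+pR$. Given a hypothetical chain $S \subsetneq S' \subseteq R$, one writes $S' = \bar{T}(R) + \eme_{S'}$ with $\eme_{S'} \supseteq I \supseteq \eme^2$, and the key step is to promote $\eme_{S'}$ from an $S'$-ideal to an $R$-ideal. The trick is to expand any $r \in R$ as $t + m$ with $t \in \bar{T}(R) \subseteq S'$ and $m \in \eme$: for $x \in \eme_{S'}$ we get $rx = tx + mx$, where $tx \in S' \cap \eme = \eme_{S'}$ and $mx \in \eme^2 \subseteq I \subseteq \eme_{S'}$. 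Once $\eme_{S'}$ is known to be an $R$-ideal lying between $I$ and $\eme$, the maximality of $I$ leaves only $\eme_{S'} = I$ (forcing $S' = S$, impossible) or $\eme_{S'} = \eme$ (forcing $S' = R$ by Lemma~\ref{lem-16}), which finishes the proof.
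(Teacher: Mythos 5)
Your proof is correct and follows essentially the same route as the paper: the forward direction via Lemma~\ref{lem-21} (the paper phrases it directly, you via the contrapositive, but it is the same use of $\eme^2+pR\subseteq \eme_S\subsetneq\eme$), and the converse by choosing a hyperplane $W\subseteq V$, passing to the ideal $I$ via Proposition~\ref{prop-24}, and forming $S=\bar{T}(R)+I$ via Corollary~\ref{cor-26}. The only difference is that you spell out the maximality verification --- showing that for any intermediate subring $S'$ the ideal $\eme_{S'}$ is in fact an $R$-ideal trapped between $I$ and $\eme$ --- which the paper compresses into the phrase ``it's maximal in $R$ since $W$ is a maximal subspace of $V$''; that added detail is sound and welcome.
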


\begin{proof} If there's one, by Lemma \ref{lem-21}, we have $\eme^2+pR\subseteq \eme_S\neq \eme$, so $V\neq 0$. Conversely, suppose that $V\neq 0$. Then $V$ has (at least one) codimension one $\F_q$-subspace $W$ which corresponds to an ideal $I\subseteq m$ containing $\eme^2+pR$ by Proposition \ref{prop-24}. By Corollary \ref{cor-26}, $S:= \bar{T}(R)+I$ is a subring with the same residue field as $R$ and it's maximal in $R$ since $W$ is a maximal subspace of $V$.
\end{proof}

Here's the main result:

\begin{thm}\label{main-thm} Assume $V\neq 0$. There's a $1-1$ correspondence between codimension one subspaces of $V$ and maximal subrings of $R$ with the same residue field as $R$.
\end{thm}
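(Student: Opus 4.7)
The plan is to set up mutually inverse maps between the two sets. Define $\Phi$ sending a codimension one subspace $W \subseteq V$ to $S_W := \bar{T}(R) + I_W$, where $I_W$ is the ideal of $R$ containing $\eme^2+pR$ that corresponds to $W$ under Proposition \ref{prop-24}; this is a subring with the same residue field as $R$ by Corollary \ref{cor-26}. In the other direction, define $\Psi$ sending a maximal subring $S$ (with the same residue field as $R$) to $W_S := \eme_S/(\eme^2+pR) \subseteq V$, which makes sense because Lemma \ref{lem-21} and the following lemma show that $\eme_S$ is an ideal of $R$ containing $\eme^2+pR$.

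The verification that these maps are inverse is mostly bookkeeping. For $\Psi\circ\Phi$, observe that the non-units of $S_W = \bar{T}(R)+I_W$ are precisely the elements with zero Teichmuller part, so $\eme_{S_W} = I_W$ and hence $W_{S_W} = W$. For $\Phi\circ\Psi$, since $S$ has the same residue field as $R$ we have $\bar{T}(S)=\bar{T}(R)$, and the decomposition from the Teichmuller set gives $S = \bar{T}(S)+\eme_S = \bar{T}(R)+\eme_S = \Phi(W_S)$.

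The real content is that $\Phi$ lands in maximal subrings and that $\Psi$ lands in codimension one subspaces. For maximality of $S_W$: suppose $S_W \subseteq S' \subsetneq R$ with $S'$ of the same residue field as $R$. The key step is to show $\eme_{S'}$ is an ideal of $R$, even though we have not assumed $S'$ is maximal. Writing $R = \bar{T}(R) + \eme$ we get $R\cdot\eme_{S'} \subseteq \bar{T}(R)\eme_{S'} + \eme\cdot\eme_{S'} \subseteq \eme_{S'} + \eme^2 \subseteq \eme_{S'}$, where the last inclusion uses $\eme^2 \subseteq I_W \subseteq \eme_{S'}$. So $\eme_{S'}$ is an ideal containing $\eme^2+pR$, hence corresponds via Proposition \ref{prop-24} to a subspace $W'$ of $V$ with $W \subseteq W' \subseteq V$; since $W$ has codimension one, $W' = W$ (giving $S' = S_W$) or $W' = V$ (giving $\eme_{S'}=\eme$, so $S'=R$ by Lemma \ref{lem-16}), a contradiction either way. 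For the codimension count on $\Psi(S)$: if $W_S$ had codimension $\geq 2$ in $V$, pick a strictly intermediate subspace $W_S \subsetneq W'' \subsetneq V$ with corresponding ideal $\eme_S \subsetneq I'' \subsetneq \eme$, and form $S'' := \bar{T}(R) + I''$. Then $S'' \supsetneq S$ strictly (any new element would have Teichmuller part forced into $\eme$, hence zero, forcing it into $\eme_S$) and $S''\neq R$ since $I''\neq \eme$, contradicting maximality of $S$.

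The main obstacle, and the cleverness of the argument, is the observation that once $\eme^2+pR$ (or even just $I_W$) sits inside the maximal ideal of a candidate subring, the $R$-ideal structure is forced rather than assumed; this is what allows the correspondence with subspaces of $V$ to control \emph{all} intermediate subrings, not merely maximal ones.
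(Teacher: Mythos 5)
Your proof is correct and follows essentially the same route as the paper: maximal subrings with residue field $\F_q$ are identified with their maximal ideals via $S=\bar{T}(R)+\eme_S$, and these ideals correspond to codimension one subspaces of $V$ through Proposition \ref{prop-24} and Corollary \ref{cor-26}. The paper's own proof is a two-line sketch of exactly this correspondence; your version merely supplies the inclusion-preservation and maximality checks that the paper leaves implicit.
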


\begin{proof} Indeed, those subrings are entirely determined by their maximal ideals, and by proposition \ref{prop-24} they correspond to codimension one subspaces of $V$. Conversely, by corollary \ref{cor-26}, such an ideal gives a subring with the same residue field as $R$. This is clearly a bijective correspondence.
\end{proof}

The easier converse of Theorem \ref{thm-12} is true:

\begin{prop}\label{prop-29} If $\eme=pR$ then $R$ is generated by $T(R)$ as $\Z/p^N\Z$-module, and $R$ is finite.
\end{prop}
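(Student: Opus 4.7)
The plan is to show directly that $R$ coincides with the subring $R_{[\F_q]}$ generated by $T(R)$; finiteness then follows for free from Proposition \ref{prop-6}.

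First I would set $S = R_{[\F_q]}$, the $\Z/p^N\Z$-submodule of $R$ generated by $T(R)$. By construction $S$ is a subring of $R$ containing $1$, so it contains the image of $\Z/p^N\Z$, and in particular $p \in S$, which gives $pS \subseteq S$. Since $S$ has residue field $\F_q$ (by definition of $R_{[\F_q]}$), the decomposition $R = \bar{T}(R) + \eme$ combined with $\bar{T}(R) \subseteq S$ yields $R = S + \eme$. Now invoke the hypothesis $\eme = pR$ to rewrite this as
\[
R = S + pR.
\]

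The core of the argument is then a short iteration. Multiplying the displayed equation by $p$ and using $pS \subseteq S$ gives $pR \subseteq S + p^2R$, so substituting back we obtain $R = S + p^2R$. By induction on $k$, the same substitution yields $R = S + p^k R$ for every $k \geq 1$. Since $R$ has characteristic $p^N$, we have $p^N R = 0$, so taking $k = N$ collapses this to $R = S$. This proves the first assertion: $R$ is generated by $T(R)$ as a $\Z/p^N\Z$-module.

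For the finiteness claim, once $R = R_{[\F_q]}$ is established we are done immediately by Proposition \ref{prop-6}, which identifies $R_{[\F_q]}$ as the image under evaluation of the finite group ring $(\Z/p^N\Z)[T(R)]$ and therefore finite. I don't foresee any real obstacle: the only point that needs care is ensuring $pS \subseteq S$ so that the iteration closes, and this is immediate from $1 \in S$. Note that everything uses only that $\eme$ is nilpotent with $\eme = pR$ and that $R$ has characteristic $p^N$; no prior result beyond the basic structure of $\bar{T}(R)$ and Proposition \ref{prop-6} is required.
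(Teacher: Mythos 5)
Your proof is correct, and it takes a recognizably different route from the paper's. The paper argues element by element: it first writes every nonzero $x$ as $p^iu$ with $u$ a unit, factors $u=t(1+py)$ with $t\in T(R)$, and then repeatedly expands the tail $p^{i+1}ty$ until it dies at $p^N=0$, effectively producing an explicit Teichmuller expansion $x=\sum_j p^j t_j$. You instead work at the level of additive subgroups: from $R=\bar{T}(R)+\eme\subseteq S+\eme$ and $\eme=pR$ you get $R=S+pR$, and the Nakayama-style successive approximation $R=S+p^kR$ collapses to $R=S$ at $k=N$. Both arguments terminate for the same reason ($p^NR=0$), but yours avoids the unit/non-unit case analysis and the slightly informal ``doing the same for $y$, this process eventually stops'' step, at the cost of not exhibiting the explicit normal form for elements that the paper's version yields as a by-product. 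Your appeal to Proposition \ref{prop-6} for finiteness is exactly what the conclusion $R=R_{[\F_q]}$ licenses. One small cosmetic point: $pS\subseteq S$ follows already from $S$ being an additive subgroup (it is $s+\cdots+s$, $p$ times), so the remark about needing $1\in S$ for this is unnecessary, though harmless.
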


\begin{proof} By induction one can write any nonzero element $x$ as $p^iu$, for $0\leq i< N$, and $u$ invertible. The exponent $i$ is unique, in fact, $i=N-j$, where $j$ is the minimal such that $p^jx=0$.We also have that the group of invertible elements is the (internal) direct product of $T(R)$ and $1+\eme$. Suffices to show the result for $u\in 1+\eme$. But $\eme=pR$, so $p^iu=p^i(1+py)=p^i+p^{i+1}y$ for  some $y\in R$ and doing the same for $y$, this process eventually stops since $p^N=0$. In particular, $R$ is finite.
\end{proof}

\begin{prop}\label{prop-30} Assume $V=0$. Then the only subring of $R$ with its same residue field is $R$.
\end{prop}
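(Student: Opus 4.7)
The plan is to unwind the hypothesis $V=0$ into the ideal equation $\mathfrak{m}=\mathfrak{m}^2+pR$, and then reduce Proposition \ref{prop-30} to the earlier lemmas \ref{lem-16} and \ref{lem-18}. Concretely, suppose $S\subseteq R$ is any subring with the same residue field as $R$. My goal is to prove $\mathfrak{m}=\mathfrak{m}_S$, because Lemma \ref{lem-16} then forces $S=R$, and Lemma \ref{lem-18} tells us that $\mathfrak{m}_S=\mathfrak{m}$ will follow from the single identity $\mathfrak{m}=\mathfrak{m}_S+\mathfrak{m}^2$.

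To produce that identity, first observe that $R=\bar{T}(R)+\mathfrak{m}$, and since $\bar{T}(S)=\bar{T}(R)$ (same residue field), we have $R=S+\mathfrak{m}$. Multiplying by $p$ gives $pR\subseteq S+p\mathfrak{m}$. Because $R$ has characteristic $p^N$ with residue field of characteristic $p$, we have $p\in\mathfrak{m}$, so $p\mathfrak{m}\subseteq\mathfrak{m}^2$, and therefore $pR\subseteq S+\mathfrak{m}^2$. Combining this with the hypothesis $\mathfrak{m}=\mathfrak{m}^2+pR$ gives $\mathfrak{m}\subseteq S+\mathfrak{m}^2$.

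Now I promote this to a statement about $\mathfrak{m}_S$ in one line: for $x\in\mathfrak{m}$ write $x=s+y$ with $s\in S,\ y\in\mathfrak{m}^2$; then $s=x-y\in\mathfrak{m}\cap S=\mathfrak{m}_S$, so $x\in\mathfrak{m}_S+\mathfrak{m}^2$. Hence $\mathfrak{m}=\mathfrak{m}_S+\mathfrak{m}^2$, and Lemma \ref{lem-18} gives $\mathfrak{m}=\mathfrak{m}_S$, after which Lemma \ref{lem-16} closes the argument.

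The only nontrivial move is recognizing that the Teichmuller decomposition lets us replace the set-theoretic coset relation $R=\bar{T}(R)+\mathfrak{m}$ by the genuine ring/ideal relation $R=S+\mathfrak{m}$, which is what makes the passage from $\mathfrak{m}\subseteq S+\mathfrak{m}^2$ to $\mathfrak{m}\subseteq\mathfrak{m}_S+\mathfrak{m}^2$ automatic; everything else is bookkeeping with the fact that $p\in\mathfrak{m}$ and a direct appeal to the preceding lemmas.
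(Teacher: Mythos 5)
Your proof is correct, but it takes a different path from the paper's. The paper unwinds $V=0$ into $\eme=\eme^2+pR$ and then applies Lemma \ref{lem-19} (Nakayama) with $I=pR$ to conclude $\eme=pR$; Proposition \ref{prop-29} then identifies $R$ as the $\Z/p^N\Z$-span of $T(R)$, i.e. $R=R_{[\F_q]}$, so $R$ is simultaneously the minimal and maximal subring with residue field $\F_q$ and uniqueness follows. You instead take an arbitrary subring $S$ with the same residue field and run the descent argument directly: from $R=\bar{T}(S)+\eme\subseteq S+\eme$ and $p\in\eme$ you get $pR\subseteq S+\eme^2$, hence $\eme=\eme^2+pR\subseteq S+\eme^2$, and the observation that $\eme_S=S\cap\eme$ upgrades this to $\eme=\eme_S+\eme^2$, at which point Lemmas \ref{lem-18} and \ref{lem-16} finish. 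All the steps check out (in particular $p\eme\subseteq\eme^2$ and the passage from $S+\eme^2$ to $\eme_S+\eme^2$ are fine). What the paper's route buys is the extra structural information that $\eme=pR$, so $R$ is the Galois ring $R(N,n)$ and coincides with its own coefficient ring; your route forgoes that identification but is more self-contained, reusing only the Teichmuller decomposition and the two lemmas already in play for the theorem on $S+\eme^2\neq R$, of which your argument is essentially a second instance.
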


\begin{proof} $V=0$ iff $\eme=\eme^2+pR$. By Lemma \ref{lem-19}, $\eme=pR$. Proposition \ref{prop-29} above says that $R$ is generated as a $\Z/p^N\Z$-module by $T(R)$. In other words, $R$ is its own ``lower ring", $R_{[\F_q]}=R$, and so it's the unique subring with $\F_q$ as residue field. 
\end{proof}

\begin{cor} The number of maximal subrings with the same residue field as $R$ is $\displaystyle \frac{q^{\rho}-1}{q-1}$, where $\rho = \dim_{\F_q}(\eme/(\eme^2+pR))$ (assumed finite).
\end{cor}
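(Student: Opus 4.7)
The plan is to combine Theorem \ref{main-thm} with a standard count of hyperplanes in a finite-dimensional vector space over $\F_q$.

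First, I would reduce to a pure linear algebra count. By Theorem \ref{main-thm}, under the assumption $V \neq 0$, maximal subrings of $R$ with the same residue field as $R$ correspond bijectively to codimension-one $\F_q$-subspaces of $V = \eme/(\eme^2 + pR)$. Since $\rho = \dim_{\F_q} V$ is assumed finite, the problem reduces to counting hyperplanes in $\F_q^{\rho}$.

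Next, I would carry out the hyperplane count. The standard argument is via duality: hyperplanes in $V$ correspond to one-dimensional subspaces of the dual $V^*$, and one-dimensional subspaces of a $\rho$-dimensional space over $\F_q$ are in bijection with nonzero vectors modulo scalar multiplication by $\F_q^\times$. This gives $(q^\rho - 1)/(q - 1)$. Alternatively, one can count directly: a hyperplane is the kernel of a nonzero linear functional on $V$, there are $q^\rho - 1$ such functionals, and two functionals have the same kernel iff they differ by multiplication by an element of $\F_q^\times$, giving the same count.

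Finally, I would dispose of the degenerate case $V = 0$, i.e. $\rho = 0$. Then the formula $(q^\rho - 1)/(q-1) = 0$ should correctly record that there are no maximal subrings with the same residue field, which is exactly the content of Proposition \ref{prop-30}. So the statement holds uniformly, with the proviso that the case $\rho = 0$ is handled by Proposition \ref{prop-30} rather than by Theorem \ref{main-thm}.

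There is no real obstacle here; the corollary is essentially a repackaging of Theorem \ref{main-thm} together with the classical hyperplane count, and the only thing to be mildly careful about is making the edge case $\rho = 0$ consistent with the earlier propositions.
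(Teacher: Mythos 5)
Your proposal is correct and follows essentially the same route as the paper, which simply cites Theorem \ref{main-thm} together with Proposition \ref{prop-30} for the case $\rho=0$; you have merely made explicit the standard hyperplane count that the paper leaves implicit. No issues.
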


\begin{proof} By Theorem \ref{main-thm} and Proposition \ref{prop-30}, this follows for any $\rho \geq 0$.
\end{proof}

\begin{cor}\label{cor-32} Let $S$ be a maximal subring with the same residue field as $R$. Then $S$ is maximal in $R$ iff the index of $S$ in $R$ is $p^n=q=\#(\F_q)$.
\end{cor}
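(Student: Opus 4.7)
The plan is to establish under the hypothesis that $[R:S]=p^n$ and that $S$ is already maximal in $R$; both conditions then hold unconditionally, so the iff is automatic. First, since $S$ has the same residue field as $R$, we have $\bar{T}(S)=\bar{T}(R)$, and the Teichmuller decomposition supplies bijections $\bar{T}(R)\times\eme\to R$ and $\bar{T}(R)\times\eme_S\to S$, inducing an isomorphism of abelian groups $R/S\cong\eme/\eme_S$. Next, apply Theorem \ref{main-thm}: $\eme_S$ corresponds to a codimension-one $\F_q$-subspace $W=\eme_S/(\eme^2+pR)$ of $V=\eme/(\eme^2+pR)$, and since $\eme^2+pR\subseteq\eme_S$ we get $\eme/\eme_S\cong V/W$, a one-dimensional $\F_q$-vector space of cardinality $q=p^n$. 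Thus $[R:S]=p^n$.

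For maximality, suppose $S\subsetneq T\subseteq R$ is an intermediate subring. Since $\bar{T}(R)\subseteq S\subseteq T$ and $R=\bar{T}(R)+\eme$, we have $T=\bar{T}(R)+(T\cap\eme)$, with necessarily $T\cap\eme\supsetneq\eme_S$. The quotient $\eme/\eme_S$ is annihilated by $\eme$ (since $\eme\cdot\eme\subseteq\eme^2\subseteq\eme_S$), so its $R$-module structure factors through $R/\eme=\F_q$; under the Teichmuller identification of $\bar{T}(R)$ with $\F_q$, this $\F_q$-action is exactly multiplication by elements of $\bar{T}(R)$. Because $\bar{T}(R)\subseteq T$ and $\eme$ is an ideal of $R$, $T\cap\eme$ is closed under multiplication by $\bar{T}(R)$, so $(T\cap\eme)/\eme_S$ is a nonzero $\F_q$-subspace of the one-dimensional space $\eme/\eme_S$. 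It therefore equals $\eme/\eme_S$, yielding $T\cap\eme=\eme$ and hence $T=R$.

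The only subtle point is the maximality step, where one must recognize that the additive quotient $\eme/\eme_S$ inherits a one-dimensional $\F_q$-vector space structure via multiplication by Teichmuller lifts, and that any intermediate subring containing $\bar{T}(R)$ is forced to respect this structure; maximality of $S$ then drops out from the fact that a one-dimensional vector space has no nontrivial proper subspaces.
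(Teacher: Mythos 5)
Your proof is correct and takes essentially the same route as the paper: the index computation rests on Theorem \ref{main-thm}, identifying $R/S\cong\eme/\eme_S\cong V/W$ with $W$ a codimension-one subspace, hence index $q$. You additionally verify explicitly that such an $S$ is maximal among \emph{all} subrings of $R$ (so both sides of the iff hold unconditionally) --- a point the paper's one-line proof leaves implicit, and which also follows more quickly by noting that any intermediate subring $T\supsetneq S$ has residue field containing, hence equal to, $\F_q$, contradicting maximality of $S$ among subrings with residue field $\F_q$.
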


\begin{proof} By Theorem \ref{main-thm}, maximal subrings $S$ with the same residue field as $R$ correspond to codimension one $\F_{q}$-subspaces of $V$, hence their indices are $q$.
\end{proof}

Now, we consider chains of subrings $R_l\subseteq R_{l-1}\subseteq ... \subseteq R$. If we have that $S\subseteq R$ has residue field $\F$, then $R_{[\F]}\subseteq S\subseteq R^{[\F]}$. Let's consider those $S$ such that $S$ has the same residue field as $R$. 

\begin{prop} Consider a chain $R_l=R_{[\F_q]}\subseteq R_{l-1}\subseteq R_{l-2}\subseteq ...\subseteq R=R_0$. This is a maximal chain iff the indices of consecutive rings are $[R_{i-1}:R_i]=q$ for all $i$.
\end{prop}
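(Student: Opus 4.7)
The plan is to reduce each step of the chain to a single application of Corollary \ref{cor-32}. As a preliminary I would observe that every $R_i$ in the chain is sandwiched between $R_{[\F_q]}$ and $R$, so its residue field is $\F_q$, and as a subring of the local ring $R$ it inherits the class-$\bigstar$ structure: its unique maximal ideal $\eme_R\cap R_i$ is nilpotent, and its characteristic is $p^N$. Consequently Corollary \ref{cor-32} is applicable with each consecutive $R_{i-1}$ playing the role of the ambient ring.

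For the forward direction, assume the chain is maximal. For each $i$ no subring lies strictly between $R_i$ and $R_{i-1}$, and since $R_i$ and $R_{i-1}$ share residue field $\F_q$, this in particular makes $R_i$ maximal among subrings of $R_{i-1}$ with the same residue field. Corollary \ref{cor-32} then forces $[R_{i-1}:R_i]=q$.

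For the backward direction, assume $[R_{i-1}:R_i]=q$ for all $i$. A quick induction starting from the finite ring $R_l=R_{[\F_q]}$ shows every $R_i$ has cardinality $q^{l-i}|R_l|$, hence is finite. Suppose for contradiction the chain fails to be maximal: some proper intermediate $R_i\subsetneq S\subsetneq R_{i-1}$ exists, and this $S$ still has residue field $\F_q$. Using the finiteness of $R_{i-1}$, I would enlarge $S$ to a subring $M$ of $R_{i-1}$ that is maximal among proper subrings of $R_{i-1}$ with residue field $\F_q$. Corollary \ref{cor-32} then yields $[R_{i-1}:M]=q$, but $M\supsetneq R_i$ would force $[R_{i-1}:M]<[R_{i-1}:R_i]=q$, a contradiction.

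The main obstacle is the backward direction, specifically guaranteeing the maximal enlargement $M$ exists. The finiteness of $R_{i-1}$, which follows from the index hypothesis combined with the finiteness of $R_{[\F_q]}$, is precisely what makes this step routine; without it one would need a Zorn-type argument to locate $M$.
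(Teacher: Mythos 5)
Your proposal is correct and follows essentially the same route as the paper: the paper's proof is simply that the chain is maximal iff each $R_i\subseteq R_{i-1}$ is a maximal subring, which by Corollary \ref{cor-32} (applied with $R_{i-1}$ as the ambient ring, legitimate since every link contains $R_{[\F_q]}$ and hence has residue field $\F_q$) happens iff each index equals $q$. Your additional care in the backward direction --- using the index hypothesis to deduce finiteness and then enlarging a hypothetical intermediate subring to a maximal one before invoking the corollary --- just fills in details the paper leaves implicit.
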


\begin{proof} This is a maximal chain iff $R_{i}\subseteq R_{i-1}$ is a maximal subring. And by Corollary \ref{cor-32} that happens iff the index is $[R_{i-1}:R_{i}]=q$.
\end{proof}

Another consequence for maximal subrings is the following result, giving a sharper inclusion than the one provided by index considerations alone:

\begin{prop} In a maximal chain $R_l=R_{[\F_q]}\subseteq R_{l-1}\subseteq R_{l-2}\subseteq ...\subseteq R=R_{0}$, we have that $p^kR\subseteq R_k$.  
\end{prop}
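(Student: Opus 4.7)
The plan is to prove the inclusion by induction on $k$, using Lemma \ref{lem-21} as the key lever. Recall that lemma says: if $S$ is a maximal subring of a ring $R'$ with the same residue field as $R'$, then $\eme_{R'}^2 + pR' \subseteq S$, and in particular $pR' \subseteq S$. Since each step in the chain is by hypothesis a maximal subring with the same residue field ($= \F_q$ throughout the chain), Lemma \ref{lem-21} applies at every link.

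For the base case $k=0$, the claim reads $R \subseteq R_0 = R$, which is trivial. For the inductive step, suppose $p^{k-1}R \subseteq R_{k-1}$. Since $R_k \subseteq R_{k-1}$ is a maximal subring with the same residue field as $R_{k-1}$ (and therefore as $R$), Lemma \ref{lem-21} applied to the pair $R_k \subseteq R_{k-1}$ yields $pR_{k-1} \subseteq R_k$. Multiplying the inductive hypothesis through by $p$ then gives
\[
p^k R \;=\; p \cdot p^{k-1}R \;\subseteq\; p R_{k-1} \;\subseteq\; R_k,
\]
completing the induction.

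The main (and essentially only) obstacle is conceptual rather than computational: one has to be careful that the residue-field hypothesis of Lemma \ref{lem-21} persists along the entire chain. This is automatic here because the chain starts at $R$ with residue field $\F_q$ and terminates at $R_{[\F_q]}$, which is the smallest subring of $R$ with residue field $\F_q$; every intermediate $R_i$ therefore sits between $R_{[\F_q]}$ and $R$, so each $R_i$ also has residue field $\F_q$, and consecutive inclusions $R_i \subseteq R_{i-1}$ preserve the residue field. With this observation the induction runs exactly as described and no further calculation is required.
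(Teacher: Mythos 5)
Your proof is correct and follows essentially the same route as the paper: induction on $k$, applying Lemma \ref{lem-21} to each maximal link $R_k \subseteq R_{k-1}$ to get $pR_{k-1} \subseteq R_k$ and then multiplying the inductive hypothesis by $p$. The only difference is cosmetic (starting the induction at $k=0$ rather than $k=1$, plus an explicit check that all $R_i$ share the residue field $\F_q$, which the paper leaves implicit).
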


\begin{proof} By induction, for $k=1$, $R_1\subseteq R$ is maximal and $pR\subseteq R_1$ by Lemma \ref{lem-21}. Assumed valid for $k$, then $p^{k+1}R=pp^kR\subseteq pR_k\subseteq R_{k+1}$ since $R_{k+1}\subseteq R_k$ is maximal.
\end{proof}

 
\subsection{Case of finite local rings with residue field $\F_p$}

The results in this sections are all corollaries from the results before.\\
Let $(R, \eme)$ be a nontrivial (i.e. not equal to $\Z/p^N\Z$) finite local ring of characteristic $p^N$ whose residue field is $\F_p$. The $\Z$-span is denoted $\langle - \rangle$. Consider the additive subgroup $\eme^2 + \langle p \rangle \subset \eme$. Then:

\begin{enumerate}
\item Any subring $R$ of index $p$ contains the subgroup $\eme^2 + \langle p \rangle$. Conversely, any subgroup $R$ containing $1$ and $\eme^2$ is a unital subring.
\item The additive group $\eme / (\eme^2 + \langle p \rangle)$ is a vector space over $\F_p$ which is not zero. Let $\rho$ be its dimension so that $\rho>0$.
\item The number of subrings of $L$ of index $p$ is $\displaystyle \frac{p^{\rho}-1}{p-1}$. In particular $L$ has a subring of index $p$.  
\item A subring is maximal iff its index is $p$.
\end{enumerate}

\subsection{Case of finite local rings with larger residue field}
 
Let $(R, \eme)$ be a finite local ring of characteristic $p^N$ with residue field $\F_q$ ($q=p^n, n>1$).

\begin{thm} There are two kinds of maximal subrings of $R$: those that have the same residue field as $R$ and those that have residue field a maximal subfield of $\F_q$. Rings of the first kind all have index $q=p^n$ in $R$ and there are $\displaystyle \frac{q^{\rho}-1}{q-1}$ of them ($0$ exactly when $R$ is the Galois ring $R(N,n)$). Here $\rho = dim_{\F_q}(V)$ is the dimension of the characteristic module. Of the second kind, there are as many as maximal subfields of $\F_q$ and there's exactly one per subfield, and their indices match in this correspondence. Their total number is the number of prime divisors of $n$.
\end{thm}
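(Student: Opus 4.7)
The plan is to partition the maximal proper subrings $S\subseteq R$ according to the residue field $\F\subseteq \F_q$ of $S$, using the containment $R_{[\F]}\subseteq S\subseteq R^{[\F]}$ established earlier. The dichotomy $\F=\F_q$ versus $\F\subsetneq \F_q$ cleanly separates the two kinds of maximal subrings asserted in the theorem.

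First I handle the case $\F=\F_q$: I claim these are exactly the maximal subrings among those with residue field $\F_q$, already classified by Theorem \ref{main-thm}. Indeed, any subring $S'\supsetneq S$ automatically has residue field $\F_q$ (residue fields can only grow), so $S'=R$; the converse is immediate. Their count is $(q^\rho-1)/(q-1)$ by the corollary to Theorem \ref{main-thm}, vanishing exactly when $\rho=0$, i.e.\ $V=0$, which by Proposition \ref{prop-30} together with Theorem \ref{thm-12} is equivalent to $R\cong R(N,n)$. Each such $S$ has index $q$ in $R$ by Corollary \ref{cor-32}.

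Next suppose $\F\subsetneq \F_q$. Since $R^{[\F]}$ has residue field $\F\neq \F_q$, we have $R^{[\F]}\neq R$, so $S\subseteq R^{[\F]}\subsetneq R$ together with maximality of $S$ forces $S=R^{[\F]}$. I then argue that $R^{[\F]}$ is a maximal subring of $R$ iff $\F$ is a maximal proper subfield of $\F_q$: any subring strictly containing $R^{[\F]}$ must have residue field strictly larger than $\F$ (since $R^{[\F]}$ is by construction the largest subring of $R$ with residue field $\F$), and conversely any intermediate subfield $\F\subsetneq \F'\subsetneq \F_q$ produces $R^{[\F]}\subsetneq R^{[\F']}\subsetneq R$, obstructing maximality. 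The maximal proper subfields of $\F_{p^n}$ are precisely the $\F_{p^{n/\ell}}$ as $\ell$ ranges over primes dividing $n$, yielding exactly one maximal subring per subfield and $\omega(n)$ in total. The index $[R:R^{[\F]}]$ equals $q/|\F|=[\F_q:\F]$, since $R^{[\F]}$ is the preimage of $\F$ under $R\twoheadrightarrow R/\eme=\F_q$, so $R/R^{[\F]}$ is in set-bijection with $\F_q/\F$; this is the asserted matching of indices under the correspondence.

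The only step requiring any attention is the strict containment $R^{[\F]}\subsetneq R^{[\F']}$ for $\F\subsetneq \F'$, but this is immediate from the preimage description of these rings. Everything else reduces to invoking Theorem \ref{main-thm}, its corollary, Corollary \ref{cor-32}, and the standard subfield lattice of $\F_{p^n}$, so I anticipate no genuine obstacle beyond organizing the case analysis carefully.
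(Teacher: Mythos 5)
Your decomposition by residue field and your treatment of both kinds of maximal subrings follow the paper's own proof essentially step for step, and most of what you write is correct. There is, however, one genuine gap, in the direction ``$\F$ a maximal subfield $\Rightarrow$ $R^{[\F]}$ a maximal subring.'' Your argument establishes only that any subring $L$ with $R^{[\F]}\subsetneq L\subseteq R$ has residue field strictly larger than $\F$, hence (by maximality of $\F$) residue field $\F_q$. That does not yet show $L=R$: a priori $L$ could be a \emph{proper} subring of $R$ with full residue field $\F_q$ --- such subrings exist precisely when $V\neq 0$, e.g.\ the maximal subrings of the first kind, so this is not a vacuous worry. The missing ingredient is that $L\supseteq R^{[\F]}\supseteq \eme=\pi^{-1}(0)$, so $\eme_L=\eme$, and a subring with residue field $\F_q$ whose maximal ideal is all of $\eme$ must equal $R$ (Lemma \ref{lem-16}); equivalently, $L\supseteq\ker\pi$ forces $L=\pi^{-1}(\pi(L))=\pi^{-1}(\F_q)=R$. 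This is exactly the step the paper spells out, and once you insert it your case analysis closes.

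Everything else matches the paper's route and is sound: the reduction of ``maximal subring of $R$ with residue field $\F_q$'' to ``maximal among subrings with residue field $\F_q$'' via monotonicity of residue fields, the count $(q^{\rho}-1)/(q-1)$ and the index $q$ via Theorem \ref{main-thm} and Corollary \ref{cor-32}, the equivalence of $\rho=0$ with $R\cong R(N,n)$, the strict inclusions $R^{[\F]}\subsetneq R^{[\F']}$ for $\F\subsetneq\F'$ giving the ``only if'' direction, and the index computation $[R:R^{[\F]}]=q/\#\F$ from the preimage description.
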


\begin{proof} Let $S$ be a maximal subring.
\begin{itemize}

\item $S$ has residue field $\F_q$.

Those are classified above and there are $\frac{q^{\rho}-1}{q-1}$ of them.

\item $S$ has residue field $\F$, a proper subfield of $\F_q$.

Then $S\subseteq R^{[\F]}\neq R$, and so $S=R^{[\F]}=\pi^{-1}(\F)$ by maximality (where $\pi$ is the projection $\pi:R\to R/\eme=\F_q$). We claim that those are maximal exactly when $\F$ is a maximal subfield. Indeed, the necessity is obvious. So let's assume $\F$ is a maximal subfield of $\F_q$. Let $L$ be a subring properly containing $S$. Then $L$ has residue field contains $\F$, hence equal to $\F$ or $\F_q$. But the former case can't happen since $S=\pi^{-1}(\F)$. So necessarily $L$ has residue field $\F_q$. But then $L$ contains $S$ which contains $\eme$ ($=\pi^{-1}(0)$) and so $\eme_L=\eme$ and since $L$ has residue field $\F$, by Lemma 10, $L=R$. So $S$ is maximal. Now, by general isomorphism theorems, their indices match: $[R:S]=[\pi^{-1}(\F_q):\pi^{-1}(\F)]=[\F_q:\F]$. From basic Galois theory of finite fields, we know that the number of maximal subfields of $\F_{q}$ is equal to the number of prime divisors of $n$.
\end{itemize}
\end{proof}

\Addresses


\begin{thebibliography}{1}

\bibitem{Atiyah} M.F. Atiyah, I.G. Macdonald, {\em Introduction to Commutative Algebra.} Addison--Wesley, Reading, MA, 1969.
  
\bibitem{Ganske} Ganske, G.; McDonald, B.R., Finite local rings. Rocky Mountain J. Math. 3 (1973), no. 4, 521-540.
    
\bibitem{MacDonald} MacDonald B. R., Finite rings with identity, M. Dekker (1974).

\bibitem{SP} The Stacks Project {https://stacks.math.columbia.edu/}


\end{thebibliography}
\end{document}